\renewcommand\normalsize{%
\@setfontsize\normalsize\@xpt\@xiipt
\abovedisplayskip 6\p@ \@plus4\p@ \@minus6\p@
\abovedisplayshortskip \z@ \@plus4\p@
\belowdisplayshortskip 6\p@ \@plus4\p@ \@minus4\p@
\belowdisplayskip \abovedisplayskip
\let\@listi\@listI}
\newtheorem{theorem}{Theorem}
\newtheorem{lemma}[theorem]{Lemma}
\newtheorem*{proA}{Problem A}
\newtheorem*{proB}{Problem B}
\newtheorem*{A}{ Theorem A}
\newtheorem*{B}{ Theorem B}
\theoremstyle{definition}
\newtheorem{remark}[theorem]{Remark}
\newtheorem*{conjecture}{Conjecture}
\begin{document}

\title{Semi-Commuting Toeplitz operators on Fock-Sobolev spaces}

\author{Jie Qin$^{1,2}$}
\address{$^1$School of Mathematics and Statistics, Chongqing Technology and Business University, 400067, China}
\address{$^2$School of Mathematics and Information Science, Guangzhou University, Guangzhou, 510006, China}
\email{qinjie24520@163.com }

\subjclass[2010]{47B35}
\keywords{Semi-Commuting Toeplitz operators; Hankel operators; Fock-Sobolev spaces}
\thanks{The author was supported by the National Natural
Science Foundation of China (11971125, 12071155).}
\begin{abstract}
Let $F^{2,m}(\mathbb{C})$ denote the Fock-Sobolev space of complex plane. In this paper, we characterize the semi-commutator  of two Toeplitz operators on $F^{2,m}(\mathbb{C})$ is zero. The result is different from the result of Bauer, Choe, and Koo. (J. Funct. Anal., 268 (2015): 3017-3060.)
\end{abstract}
%

\maketitle
\section{Introduction}
Let $\mathbb{C}$ denote the complex plane and  $dA$ be the area measure. The Fock space $F^2$ is the space of all entire functions $f$ on $\mathbb{C}$ such that
$$||f||_2^2=\frac{1}{\pi} \int_{\mathbb{C}}|f(z)|^2e^{-|z|^2}dA(z)<\infty.$$

For any fixed non-negative integer $m$, the Fock-Sobolev space $F^{2,m}(\mathbb{C})$ consists of all entire functions $f$ on $\mathbb{C}$ such that $$\| f^{(m)}\|_2^2<\infty.$$
Cho and Zhu\cite{Cho} have shown that  $f\in F^{2,m}(\mathbb{C}^n)$ if and only if $z^\alpha f(z)$ is in $F^2(\mathbb{C}^n)$ for all multi-indices $\alpha$ with $|\alpha|=m.$ The result can of course be take as an alternative definition of $F^{2,m}.$ Let $L^2_m$ be the space of Lebesgue measurable functions $f$ on $\mathbb{C}$ such that the function $|z|^mf(z)$ is in $L^2(\mathbb{C},e^{-|z|^2}dA).$ It is well known that $L^2_m$ is a Hilbert space with the inner product $$<f,g>_m=\frac{1}{\pi m!}\int_{\mathbb{C}} f(z)\overline{g(z)}|z|^{2m}e^{-|z|^2}dA(z),\quad f,g\in L^2_m.$$
It is clear that the Fock-Sobolev space $F^{2,m}(\mathbb{C})$ is a closed subspace of the Hilbert space $L^2_m$. There is an orthogonal projection $P$ from $L^2_m$ onto $F^{2,m}(\mathbb{C})$, which is given by
$$Pf(z)=\frac{1}{\pi m!}\int_{\mathbb{C}} f(w){K_m(z,w)}|w|^{2m}e^{-|w|^2}dA(w),\quad f\in F^{2,m},$$
where $$K_m(z,w)=\sum_{k=0}^\infty \frac{m!}{(k+m)!} (z\overline{w})^k$$ is the reproducing kernel of Fock-Sobolev space $F^{2,m}(\mathbb{C}).$  See \cite{Cho} for more details about the Fock-Sobolev space.

Let $D$ denote the set of all finite linear combinations of kernel functions in $F^{2,m}(\mathbb{C}).$ It is well-known that  $D$ is dense in $F^{2,m}(\mathbb{C}).$ Let $\varphi$ be a Lebesgue measurable function  on $\mathbb{C}$  that satisfies
\begin{equation}\label{welldown}
\int_{\mathbb{C}}|\varphi(w)||K_m(z,w)||w|^{2m}e^{-|w|^2}dA(w)<\infty.
\end{equation}
So we can densely define the Toeplitz operator with the symbol $\varphi$ on $F^{2,m}(\mathbb{C})$ as follows:
\begin{align*}
T_\varphi f(z)
&=\frac{1}{\pi m!}\int_{\mathbb{C}} \varphi(w)f(w){K_m(z,w)}|w|^{2m}e^{-|w|^2}dA(w).
\end{align*}
The Hankel operator $H_\varphi$ with the symbol $\varphi$ is given by $H_\varphi f=(I-P)(\varphi f),$
where $I$ is the identity operator on $L^2_m.$ Let $k_m(z,w)$ be the normalization of the reproducing kernel $K_m(z,w),$ the Berezin transform of  $T_\varphi$ is $$\widetilde{T}_\varphi(z)=<\varphi k_m(w,z),k_m(w,z)>_m=\widetilde{\varphi}(z).$$

Let $H$ be Hibert space of analytic function. Sarason's product problem for $H$ is the following:
\textit{ Suppose $f,g\in H,$ is the boundedness of $T_fT_{\overline{g}}$ equivalent to  the boundedness of $\left[\widetilde{|f|^{2}}(z) \widetilde{|g|^{2}}(z)\right]?$}

 Sarason \cite{Sarason} originally asked this for $H=H^2$, the Hardy space of unit circle. We now call it the Sarason's
 conjecture. It was partially solved for Topelitz operators on the Hardy space in \cite{Zheng}, on the  Bergman space in \cite{Stroethoff}. Unfortunately, Sarason's conjecture is not true, both for Hardy space and Bergman space, see \cite{Aleman,Nazarov} for counterexamples. However, the Sarason's conjecture is true for  classical Fock space. A recent very impressive theorem, proved by Cho and Zhu, characterizes the Sarason's product problem for the classical Fock space, see \cite{Cho1}.

 The corresponding question for analogous Hankel operators defined on the $F^2$ was resolved by Ma-Yan-Zheng-Zhu in their paper \cite{MA}.
 They  obtained that there are functions $f(z)=e^{2\pi \texttt{i} z}$ and $g(z)=e^{z}$ such that $H_{\overline{f}}^{*} H_{\overline{g}}=0$ on $F^2$.  By definition,
 \begin{equation}\label{l111}
 H_{\overline{f}}^*H_{\overline{g}}=T_{f\overline{g}}-T_{f}T_{\overline{g}}=(T_f,T_{\overline{g}}].
 \end{equation}

Up to our knowledge,  the boundedness of a single
Toeplitz operator on $F^2$ is still an open problem, see \cite{Bauer,Berger,Berger1}. So, it is difficult to characterize the commuting Toeplitz operators on Fock spaces. In fact, the commutativity of Toeplitz operators on  $F^2$ is also an open problem. There is only one  result for the semi-commuting Toeplitz operators on Fock space $F^2$, see \cite{Bauer1}.
\begin{theorem}\label{A}
Let $\epsilon(\mathbb{C})$ be the set of entire functions $g$ on $\mathbb{C}$ such that
$|g(z)|e^{-c|z|}$ is essentially bounded on $\mathbb{C}$ for some $c>0.$ Suppose $f,g\in \epsilon(\mathbb{C}),$ then the following statements are equivalent:\\
(a) $T_fT_{\overline{g}}=T_{f\overline{g}}$.\\
(b)  $\widetilde{f\overline{g}}=f\overline{g}.$\\
(c)  Either (1) or (2) holds;

(1) $f$ or \ $g$ is constant.

(2)  There are finite collections  $\{a_j\}_{j=1}^N$  and  $\{b_l\}_{l=1}^M$ of distinct complex numbers such that
$$f \in Span \{K_{a_1},\cdots, K_{a_N}\},\quad \text{and} \quad g\in Span \{K_{b_1},\cdots, K_{b_M}\}$$
with \ $a_j\overline{b_l}= 2\eta\pi \mathrm{i}$  for each  $j$ and $l$. Here, $\eta$ is any integer and $K_z$ is the kernel of $F^2.$
\end{theorem}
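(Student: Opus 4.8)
The plan is to establish the cyclic chain $(a)\Rightarrow(b)\Rightarrow(c)\Rightarrow(a)$, so that no implication must be reversed directly. The implication $(a)\Rightarrow(b)$ is the soft end. Since $f,g$ are entire, the coanalytic Toeplitz operator acts on normalized kernels by $T_{\overline g}k_z=\overline{g(z)}\,k_z$ (proved by pairing $\langle \overline g K_z,K_w\rangle=\langle K_z,gK_w\rangle=\overline{g(z)}\,K_z(w)$, and similarly for $f$). Hence for the semi-commutator $S=T_{f\overline g}-T_fT_{\overline g}$ one gets
\begin{equation*}
\widetilde S(z)=\widetilde{f\overline g}(z)-\langle T_{\overline g}k_z,T_{\overline f}k_z\rangle=\widetilde{f\overline g}(z)-f(z)\overline{g(z)},
\end{equation*}
so $S=0$ forces $\widetilde S\equiv0$, which is exactly $(b)$.

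The technical hinge is an explicit series form of the Berezin transform. From $\Delta=4\partial\overline\partial$ and $\Delta^k(f\overline g)=4^kf^{(k)}\overline{g^{(k)}}$ (valid because $f$ is holomorphic and $\overline g$ antiholomorphic), together with the identification of the Fock Berezin transform with the heat semigroup $e^{\frac14\Delta}$ — equivalently, by Taylor expanding in the Gaussian integral $\widetilde{f\overline g}(z)=\frac1\pi\int f(z+u)\overline{g(z+u)}e^{-|u|^2}dA(u)$ and using $\frac1\pi\int u^n\overline u^m e^{-|u|^2}dA=\delta_{nm}n!$ — I would derive $\widetilde{f\overline g}=\sum_{k\ge0}\frac{1}{k!}f^{(k)}\overline{g^{(k)}}$. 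Thus $(b)$ reads $\sum_{k\ge1}\frac1{k!}f^{(k)}(z)\overline{g^{(k)}(z)}=0$. Writing $\check g(\zeta)=\overline{g(\overline\zeta)}$ (an entire function), both $\sum_k\frac1{k!}f^{(k)}(z)\check g^{(k)}(\zeta)$ and $f(z)\check g(\zeta)$ are entire on $\mathbb C^2$ and agree on the real-analytic diagonal $\{\zeta=\overline z\}$; uniqueness of holomorphic extension upgrades $(b)$ to the two-variable identity
\begin{equation*}
\sum_{k\ge1}\frac{f^{(k)}(z)\,\check g^{(k)}(\zeta)}{k!}=0,\qquad (z,\zeta)\in\mathbb C^2.\tag{$\star$}
\end{equation*}

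For $(c)\Rightarrow(a)$ I would argue on the dense span $D$ of kernels. If $f$ (or $g$) is constant the identity is immediate from $T_f=f(0)I$ (resp.\ $T_{\overline g}=\overline{g(0)}I$). In case $(2)$, by bilinearity it suffices to treat $f=K_a$, $g=K_b$, where on kernels $T_{K_a}K_w=K_{a+w}$, $T_{\overline{K_b}}K_w=\overline{K_b(w)}K_w$, and a short computation gives $T_{K_a\overline{K_b}}K_w=e^{\overline a b}\,\overline{K_b(w)}K_{a+w}$. Comparing, $T_{K_a}T_{\overline{K_b}}=T_{K_a\overline{K_b}}$ on $D$ precisely when $e^{\overline a b}=1$, i.e.\ $\overline a b\in2\pi i\Z$, equivalently $a\overline b\in2\pi i\Z$. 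The same computation, run through the bilinear expansions, also verifies $(c)\Rightarrow(b)$ should one prefer to close the loop that way.

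The heart of the theorem, and the step I expect to be the main obstacle, is $(b)\Rightarrow(c)$: that $(\star)$ forces $f,g$ to be finite exponential sums obeying the node condition. The exponential eigenfunctions already satisfy $(\star)$, since $f=e^{\lambda z},\ \check g=e^{\mu\zeta}$ give $(e^{\lambda\mu}-1)e^{\lambda z+\mu\zeta}$, vanishing iff $\lambda\mu\in2\pi i\Z$. To show these are essentially the only solutions I would pass to the Borel–Pólya transforms $\phi,\psi$ of the finite-type functions $f,\check g$ (holomorphic off the conjugate indicator diagrams, with singularities exactly at the frequencies), and use $f^{(k)}(z)=\frac1{2\pi i}\oint\phi(w)w^ke^{zw}\,dw$. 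Substituting into $(\star)$ and summing $\sum_{k\ge1}(w\omega)^k/k!=e^{w\omega}-1$ yields
\begin{equation*}
\frac{1}{(2\pi i)^2}\oint\oint\phi(w)\psi(\omega)\bigl(e^{w\omega}-1\bigr)e^{zw+\zeta\omega}\,dw\,d\omega=0\qquad\text{for all }(z,\zeta).
\end{equation*}
By injectivity of the double inverse transform the density $\phi(w)\psi(\omega)(e^{w\omega}-1)$ must be jointly entire; since $e^{w\omega}-1$ is entire, the poles of $\phi\otimes\psi$ can only cancel if they are simple (forcing pure exponentials, not exponential-polynomials, hence excluding nonconstant polynomial symbols) and if $e^{\lambda\mu}=1$ at every pair of poles $(\lambda,\mu)$. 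Translating back via $\lambda_j=\overline{a_j}$, $\mu_l=b_l$ gives $f\in\mathrm{Span}\{K_{a_j}\}$, $g\in\mathrm{Span}\{K_{b_l}\}$ with $a_j\overline{b_l}\in2\pi i\Z$, the constant case being the degenerate pole at the origin where the node condition holds automatically. The delicate points I anticipate are the rigorous justification of the injectivity/entireness passage for the double Borel transform and the bookkeeping excluding higher-order poles; I would handle the former by testing $(\star)$ against the exponentials $e^{zw+\zeta\omega}$ and invoking uniqueness of the indicator diagram, and the latter by matching principal parts of $\phi$ and $\psi$ at each frequency.
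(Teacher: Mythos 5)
Your outer implications are sound: $(a)\Rightarrow(b)$ via $T_{\overline g}k_z=\overline{g(z)}k_z$ is standard, the expansion $\widetilde{f\overline g}=\sum_{k\ge0}\frac{1}{k!}f^{(k)}\overline{g^{(k)}}$ is the correct heat-transform identity and is justifiable for exponential-type symbols, the polarization to the two-variable identity $(\star)$ is legitimate, and your kernel computation for $(c)\Rightarrow(a)$ (including $T_{K_a\overline{K_b}}K_w=e^{\overline a b}\,\overline{K_b(w)}\,K_{a+w}$) is right. The fatal gap is the pivot of $(b)\Rightarrow(c)$: the claim that vanishing of the double contour integral for all $(z,\zeta)$ forces $\Phi(w,\omega)=\phi(w)\psi(\omega)(e^{w\omega}-1)$ to be jointly entire. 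That is false. Vanishing of $\oint\oint\Phi(w,\omega)e^{zw+\zeta\omega}\,dw\,d\omega$ for all $(z,\zeta)$ only annihilates the moments $\oint\oint\Phi\,w^n\omega^m\,dw\,d\omega$, i.e.\ the doubly principal Laurent coefficients of $\Phi$ at infinity; what follows is only a splitting $\Phi=A+B$ with $A$ entire in $w$ and $B$ entire in $\omega$, not joint entirety. The theorem's own nontrivial solutions witness the failure: for $f=e^{2\pi i z}$, $g=e^{z}$ (the Ma--Yan--Zheng--Zhu pair cited in this very paper), one has $\phi(w)=1/(w-2\pi i)$, $\psi(\omega)=1/(\omega-1)$, and
\begin{equation*}
\Phi(w,\omega)=\frac{e^{w\omega}-1}{(w-2\pi i)(\omega-1)}
\end{equation*}
is genuinely singular (at $(2\pi i,\tfrac12)$ the numerator equals $e^{\pi i}-1=-2$), yet the double transform vanishes identically, because the residue computation couples the two poles and produces the factor $e^{2\pi i\cdot 1}-1=0$. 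Indeed your claim would prove too much: if $\Phi$ were jointly entire, then fixing two regular non-vanishing points $\omega_0,\omega_1$ of $\psi$ with irrational ratio would confine the singular support of $\phi$ to $\frac{2\pi i}{\omega_0}\mathbb{Z}\cap\frac{2\pi i}{\omega_1}\mathbb{Z}=\{0\}$, forcing $f$ (and symmetrically $g$) to be constant and contradicting case (2) of the statement being proved.

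A second, related gap: your parenthetical ``singularities exactly at the frequencies'' presupposes that $f$ and $\check g$ are exponential sums. For a general $f\in\epsilon(\mathbb{C})$ the Borel transform is holomorphic off the conjugate indicator diagram, but its singular set can be an arbitrary compact set (branch cuts, etc.), not isolated poles; establishing that it consists of finitely many \emph{simple} poles --- equivalently that $f\in \mathrm{Span}\{K_{a_1},\dots,K_{a_N}\}$, with polynomial factors $p_jK_{a_j}$ excluded --- is precisely the hard content of $(b)\Rightarrow(c)$, and your only tool for it was the false entirety claim. Note that the moment identities do deliver the endgame once the pole structure is granted: for $\phi=\sum_j c_j/(w-\lambda_j)$ and $\psi=\sum_l d_l/(\omega-\mu_l)$ the moments equal $\sum_{j,l}c_jd_l\bigl(e^{\lambda_j\mu_l}-1\bigr)\lambda_j^n\mu_l^m$, and a Vandermonde argument yields $e^{\lambda_j\mu_l}=1$ --- parallel to the determinant computations in Step 1 of Theorem~\ref{zerop}. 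But the structural input must come first: the original proof in Bauer--Choe--Koo (which this paper cites rather than reproves) starts from the identification $\epsilon(\mathbb{C})=\mathcal{A}_1$, i.e.\ $f=\sum_j p_jK_{a_j}$, and then uses explicit Berezin-transform formulas to eliminate nonconstant $p_j$ and extract the node condition. To repair your argument you would either have to prove the simple-pole structure from the weaker mixed decomposition $\Phi=A+B$ (nontrivial, and not sketched) or, as in the literature, invoke that decomposition of $\epsilon(\mathbb{C})$ at the outset.
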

In fact, the $\epsilon(\mathbb{C})$ can be characterized by the following set
$$
\mathcal{A}_1=\left\{\sum_{j=1}^{N} p_{j} K_{a_{j}}: N \in \mathbb{N} \text { and } p_{j} \in \mathcal{P}, a_{j} \in \mathbb{C} \text { for } j=1, \ldots, N\right\}.
$$
where $\mathcal{P}$ is  the space of all holomorphic polynomials on $\mathbb{C}$.
For the Fock space $F^2$, there is a unitary operator $U_w$ on $F^2$ such that $U_{w} f(z)=f(z-w)k_w(z),$ where $k_w$ is the normalized reproducing kernel of $F^2$. Using the unitary operator, it is easy to see that
\begin{align*}
\widetilde{f}(z)=\langle fk_z,k_z\rangle_{F^2}
&=\frac{1}{\pi}\int_{\mathbb{C}} f(z \pm w) e^{-|w|^2}dA (w).
\end{align*} More explicitly,  for $f,g\in \mathcal{A}_{1},$ the berezin of $f\overline{g}$ must be of the form
$$
\sum_j\widetilde{\left\{p_j \bar{\eta_j} K_{a_j} \overline{K_{b_j}}\right\}}(z)=\sum_je^{b_j \cdot \overline{a_j}} K_{a_j}(z) K_z(b_j) \eta_j^{*}\left(\partial_{z}+\overline{z}+\overline{a_j}\right) p(z+b_j),
$$
see Lemma 3.3 in \cite{Bauer1}.
The results of \cite{Bauer1} are base on the property of Berezin transform and the decomposition.

Unfortunately, we know nothing about the semi-commutant of two Toeplitz operators on Fock-Sobolev space $F^{2,m}(\mathbb{C})(m\neq0)$ from 2014 until now.
There are two natural problems, which arise from Theorem \ref{A} .
\begin{proA} Let $f,g\in D,$ what is the relationship between their symbols when two Toeplitz operators $T_f$ and $T_{\overline{g}}$
commute on $F^{2,m}(\mathbb{C})?$
\end{proA}
\begin{proB}
Is Theorem \ref{A} hold?
\end{proB}

In this paper, we will focus on the two problems. First, we will show that there are actually no nontrivial functions $f$ and $g$ in $D$ such that $H_{\overline{f}}^* H_{\overline{g}}=(T_f,T_{\overline{g}}]=0$ on $F^{2,m}(\mathbb{C})(m>0)$.
Precisely, the first main result is stated as follows.
\begin{A} Let $m$ be a positive integer, and  suppose that $f$ and $g$ are functions in $D$. Then the following statements are equivalent. \\
(a) $H_{\overline{f}}^* H_{\overline{g}}=0$ on $F^{2,m}(\mathbb{C})$ . \\
 (b) $T_{f}T_{\overline{g}}=T_{f\overline{g}}$ on $F^{2,m}(\mathbb{C})$.\\
 (c)  $\widetilde{f\overline{g}}=f\overline{g}.$\\
 (d) At least one of  $f$ and $g$ is a constant.
 \end{A}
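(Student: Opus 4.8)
The plan is to treat the four conditions as a cycle in which only $(c)\Rightarrow(d)$ carries real content, and to reduce that to an explicit coefficient computation. For the routine implications I would argue as follows. The operator identity $H_{\overline f}^{*}H_{\overline g}=T_{f\overline g}-T_fT_{\overline g}$ of \eqref{l111} holds verbatim on $F^{2,m}(\mathbb{C})$ for $f,g\in D$: one checks $H_{\overline f}^{*}u=P(fu)$ for $u\perp F^{2,m}$ and expands $T_fT_{\overline g}h=P\big(f(\overline g h-H_{\overline g}h)\big)$, so $(a)\Leftrightarrow(b)$. For $(b)\Leftrightarrow(c)$ I would use that every reproducing kernel is an eigenvector of a co-analytic Toeplitz operator, $T_{\overline g}k_z=\overline{g(z)}k_z$, together with $T_fk_z=fk_z$; these give $\widetilde{T_fT_{\overline g}}(z)=\overline{g(z)}\widetilde f(z)=f(z)\overline{g(z)}$, while $\widetilde{T_{f\overline g}}=\widetilde{f\overline g}$. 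Hence $(c)$ is exactly $\widetilde{T_{f\overline g}}=\widetilde{T_fT_{\overline g}}$, and injectivity of the Berezin transform on operators yields $(b)$. Since $(d)\Rightarrow(b)$ is immediate ($f\equiv c$ forces $T_fT_{\overline g}=cT_{\overline g}=T_{f\overline g}$, and symmetrically for $g$), everything reduces to $(c)\Rightarrow(d)$.

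For the hard implication I would test $(b)$ against kernels: it is equivalent to the scalar identity $\langle f\overline g\,K_m(\cdot,w),K_m(\cdot,z)\rangle_m=\overline{g(w)}f(z)K_m(z,w)$ for all $z,w$. Expanding in powers of $z$ and $\overline w$ and matching coefficients turns this into a doubly indexed family of identities in the Taylor coefficients of $f$ and $g$. Substituting $f=\sum_j c_jK_m(\cdot,a_j)$ and $g=\sum_l d_lK_m(\cdot,b_l)$ and setting $\kappa(t):=\sum_{s\ge1}t^{s}/(s+m)!$, the family organizes into $\sum_{j,l}c_j\overline{d_l}\,\overline{a_j}^{\,q-p}W_{p,q}(\overline{a_j}b_l)=0$ for all $p,q\ge0$, with $W_{p,q}$ an explicit entire function. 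The boundary slices give $W_{p,0}(t)=t^{p}\kappa(t)$ and $W_{0,q}(t)=\kappa(t)$, so a Vandermonde argument in the distinct nodes $a_j$, respectively $b_l$, produces the necessary conditions $\sum_{l}\overline{d_l}\kappa(\overline{a_j}b_l)=0$ and $\sum_{j}c_j\kappa(\overline{a_j}b_l)=0$, that is $g(a_j)=g(0)$ for all contributing $j$ and $f(b_l)=f(0)$ for all contributing $l$.

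The decisive input, and the precise point where the answer departs from the $m=0$ theorem of Bauer--Choe--Koo, is the next order. A direct computation gives $W_{2,2}(t)=\tfrac{m}{(m+1)!}\,t+(t^{2}+4t+2)\kappa(t)$. For a single pair $f=cK_m(\cdot,a)$, $g=dK_m(\cdot,b)$ the slice $q=0$ already forces $\kappa(\overline{a}b)=0$, which annihilates the second term and leaves $\tfrac{m}{(m+1)!}\overline{a}b=0$; since $m>0$ this forces $\overline{a}b=0$, i.e.\ $a=0$ or $b=0$. When $m=0$ the linear term disappears and $\kappa(t)=0$ has the lattice solutions $t\in 2\pi\mathrm{i}\,\mathbb{Z}$, which is exactly the mechanism behind Theorem \ref{A}; the factor $m$ is the obstruction that rules out every nontrivial coincidence on $F^{2,m}$ with $m>0$.

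The remaining step, which I expect to be the main obstacle, is to promote this single-pair rigidity to arbitrary $f,g\in D$. Here the conditions $\sum_j c_j\kappa(\overline{a_j}b_l)=0$ and $\sum_l\overline{d_l}\kappa(\overline{a_j}b_l)=0$ no longer let one discard the $\kappa$-terms pairwise, since in $W_{p,q}$ they appear weighted by polynomials in the product $\overline{a_j}b_l$. I would therefore read $\widetilde{f\overline g}=f\overline g$ asymptotically: using $K_m(z,a)\sim m!\,(z\overline a)^{-m}e^{z\overline a}$ one expands both sides into exponential-polynomial profiles attached to the distinct frequencies $e^{z\overline{a_j}+\overline z b_l}$, which are linearly independent, so matching the prefactor of each profile isolates the single-pair constraint and returns $\overline{a_j}b_l=0$ for every contributing pair. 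The technical heart is controlling the polynomial prefactors produced by the Gaussian averaging and by the projection $P$, which mixes monomial orders; once this frequency separation is made rigorous, $\overline{a_j}b_l=0$ for all contributing $(j,l)$ forces $a_j=0$ for every $j$ or $b_l=0$ for every $l$, which is $(d)$.
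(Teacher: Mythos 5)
Your reductions among (a), (b), (c) and your single-pair computation are sound and agree with the paper's mechanism: the coefficient $\tfrac{m}{(m+1)!}$ you extract from $W_{2,2}$ is, up to the normalization $m!$, exactly the quantity $\Theta=\tfrac{m(j-1)(l-1)}{m+1}$ that the paper isolates in Lemma \ref{re1}, and it is indeed the precise point where $m>0$ departs from the Bauer--Choe--Koo picture. The genuine gap is the general case $f=\sum_jc_jK_m(\cdot,a_j)$, $g=\sum_ld_lK_m(\cdot,b_l)$, which you explicitly leave open. Your boundary slices only give the \emph{summed} conditions $\sum_jc_j\kappa(\overline{a_j}b_l)=0$ and $\sum_l\overline{d_l}\kappa(\overline{a_j}b_l)=0$, which are strictly weaker than what is needed to kill the $\kappa$-weighted terms in the higher slices, and the asymptotic ``frequency separation'' you propose as a remedy is not carried out and is genuinely problematic: $K_m(z,a)\sim m!\,(z\overline a)^{-m}e^{z\overline a}$ is only an asymptotic, valid away from directions where $e^{z\overline a}$ is small; the Berezin transform on $F^{2,m}$ has no translation structure and no closed form (this is exactly why the Bauer--Choe--Koo method fails here); and controlling the polynomial prefactors produced by $P$ and the Gaussian averaging is the entire difficulty, not a technicality. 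As written, (c)$\Rightarrow$(d) is proved only when $f$ and $g$ are single kernel functions.

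The paper closes this gap by purely algebraic means, and the two missing ingredients are worth naming. First, the summed boundary condition is upgraded to the per-pair condition $K_m(b_l,a_j)=1$, i.e.\ $\kappa(\overline{a_j}b_l)=0$ for \emph{every} pair, via a Vandermonde factorization in the nodes together with the linear independence of the family $\{1-K_m(\cdot,a_j)\}_j$ (Lemma \ref{lemma1}); this is precisely what allows the $\kappa$-terms in every higher slice to be discarded termwise, the step you say you cannot do. Second, once the pairwise vanishing holds, Lemmas \ref{ex} and \ref{re1} reduce the first nontrivial slice to a bilinear power-sum identity, and an induction over successive slices (using that the top coefficient vanishes while $\Theta\neq0$ for $m>0$) yields, in the paper's notation, $\sum_{i,\lambda}a_i\overline{b_\lambda}\,\overline{A_i}^{\,n}B_\lambda^{\,n}=0$ for all $n\ge1$; a final Vandermonde argument in the nodes then forces one of the symbols to be constant. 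You would need to supply arguments of this kind, or make the frequency-separation heuristic rigorous (which I do not see how to do), before the proof is complete.
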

 Every function $f \in D$ is related to the reproducing kernel by the  formula \begin{equation}\label{1}
f(z)=\sum_{k=1}^{N} c_{k} K_m\left(z, \omega_{k}\right),
\end{equation}
where $N$ is a finite positive integer and $c_k,\omega_{k}\in \mathbb{C}.$
The main obstacle of the proof of the Theorem  is to calculate the product of the Hankel operators, although every function in $D$ has the form (\ref{1}).  More specifically, the idea of the proof of the Theorem is to solve the following equations of $a_k$, $A_k$, $b_j$ and $B_j$:
$$(\sum_{k=1}^{N_1} a_{k} H_ {\overline{K_m\left(z, A_{k}\right)}}^*)(\sum_{j=1}^{N_2} b_{j} H_ {\overline{K_m\left(z, B_{j}\right)}})z^l=0,$$
where $l$ is any negative integer, and $N_1,N_2$ are two fixed  finite positive integers. This proof provide a  simpler way to prove the main Theorem in \cite{Bauer1}(Theorem \ref{A}, in this paper),  see step 1 of  Theorem \ref{zerop}.

As is well known, Fock spaces and Fock Sobolev spaces  have the same properties, since the Fock-sobolev space is the Fock space with the radial weight. For instance, Sarason's problem,  the boundedness and compactness of composition operators, the commuting Toeplitz operators with radial symbols and so on, see \cite{Bauer2,Carswell,Chen,Cho2,Cho1,Choe}. But the answer of the second question is negative.
\begin{B}
Let $m$ be a positive integer. Then  Theorem 1 doesn't hold on $F^{2,m}(\mathbb{C})$. In fact, $T_{e^{a z}} T_{\overline{e^{b z}}}=T_{e^{a z} \overline{e^{b z}}}$ if and only if $ab=0.$
\end{B}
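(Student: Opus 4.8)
The plan is to prove both implications by direct computation in the orthogonal basis $\{z^k\}_{k\geq 0}$ of $F^{2,m}(\C)$, for which $\langle z^j,z^k\rangle_m=\delta_{jk}\,(k+m)!/m!$ and, by a one-line Gaussian integral, $P(z^p\overline{z}^q)=\frac{(p+m)!}{(p-q+m)!}z^{p-q}$ when $q\le p$ and $0$ otherwise. Since $e^{az}$ is entire and $e^{az}u\in F^{2,m}(\C)$ for every $u\in D$, the analytic Toeplitz operator acts by multiplication, $T_{e^{az}}u=e^{az}u$. The easy direction is then immediate: if $ab=0$ then one of the two symbols is the constant $1$, the corresponding Toeplitz operator is a scalar multiple of the identity, and $T_{e^{az}}T_{\overline{e^{bz}}}=T_{e^{az}\overline{e^{bz}}}$ follows at once. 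It remains to show that $ab\neq 0$ forces the semi-commutator to be nonzero.

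For the converse I would test the operator on the constant function $1\in D$ and compare the two sides coefficient by coefficient. A short computation gives $T_{\overline{e^{bz}}}1=1$, hence $T_{e^{az}}T_{\overline{e^{bz}}}1=e^{az}=\sum_k \frac{a^k}{k!}z^k$, while expanding $e^{az}\overline{e^{bz}}$ as a double series and projecting term by term yields the coefficient of $z^k$ in $T_{e^{az}\overline{e^{bz}}}1=P(e^{az}\overline{e^{bz}})$. Writing $t=a\overline{b}$, the $s=0$ term of the resulting series cancels $a^k/k!$ exactly, so the coefficient of $z^k$ in $\bigl(T_{e^{az}\overline{e^{bz}}}-T_{e^{az}}T_{\overline{e^{bz}}}\bigr)1$ equals
\[
c_k=\frac{a^k m!}{(k+m)!}\Bigl(\Phi_k(t)-\binom{k+m}{m}\Bigr),\qquad \Phi_k(t)=\sum_{s=0}^{\infty}\binom{k+s+m}{m}\frac{t^{s}}{s!}.
\]
Thus, for $a\neq 0$, the semi-commutator vanishes on $1$ if and only if $\Phi_k(t)=\binom{k+m}{m}$ for every $k\geq 0$, and it suffices to prove that, when $m\geq 1$, this infinite system forces $t=0$.

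The crucial simplification is to collapse these infinitely many transcendental conditions into one finite polynomial identity in $k$. Applying the Vandermonde identity $\sum_{i}\binom{m+k}{m-i}\binom{s}{i}=\binom{m+k+s}{m}$ to the coefficients of $\Phi_k$, I would factor out $e^{t}$ and obtain $\Phi_k(t)=e^{t}A(k)$, where $A(k)=\sum_{i=0}^{m}\binom{m+k}{m-i}\frac{t^{i}}{i!}$. The required condition thus reads $e^{t}A(k)=B(k)$ for all $k\geq 0$, with $B(k)=\binom{k+m}{m}$. Both $A(k)$ and $B(k)$ are polynomials in $k$ of degree $m$ with leading coefficient $1/m!$, so agreement at all non-negative integers forces the polynomial identity $e^{t}A(k)=B(k)$. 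Comparing the coefficients of $k^{m}$ gives $e^{t}=1$; substituting this back and comparing the coefficients of $k^{m-1}$ in $A(k)=B(k)$ gives $t/(m-1)!=0$, i.e. $t=a\overline{b}=0$, contradicting $ab\neq 0$. This completes the hard direction.

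The main obstacle is the middle step: organizing the double-series projection so that the telescoping against $e^{az}$ is visible, and then recognizing that the family $\{\Phi_k(t)=\binom{k+m}{m}\}_{k\geq 0}$ is governed by a degree-$m$ polynomial in $k$ whose top two coefficients already decide the question. It is precisely the coefficient of $k^{m-1}$ that uses $m\geq 1$: for $m=0$ this coefficient is absent and one is left only with $e^{t}=1$, i.e. $a\overline{b}\in 2\pi\mathrm{i}\,\Z$, which recovers the classical Fock-space phenomenon of Theorem \ref{A} and explains why the Fock--Sobolev answer genuinely differs.
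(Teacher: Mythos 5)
Your proposal is correct, and up to the choice of test function it starts exactly where the paper does: both arguments apply the identity to the constant function $1\in D$ and read off, for each output degree, the single family of conditions $\sum_{s\ge 1}\binom{k+s+m}{m}\frac{t^s}{s!}=0$ (equivalently the paper's $\rho(j)=0$), with $t=a\overline{b}$. Where you genuinely diverge is in how this infinite system is solved. The paper treats $\rho(j)=\sum_{s\ge1}\frac{t^s}{s!}\prod_{i=1}^m(s+j+i)$ and takes iterated finite differences in $j$ to strip the product down, ending with two closed-form transcendental equations, $t e^{t}+m(e^{t}-1)=0$ and $t^2e^{t}+2mte^{t}+m(m-1)(e^{t}-1)=0$, from which it extracts $t=-(m+1)$ and $e^{t}=-m$ and concludes by inconsistency (the displayed ``contradiction'' $e^{t}-t-1=0$ is actually satisfied by these values; the real obstruction is that $e^{-(m+1)}>0$ cannot equal $-m$). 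You instead use the Vandermonde identity to factor $\Phi_k(t)=e^{t}A(k)$ with $A$ a degree-$m$ polynomial in $k$, turn the system into the single polynomial identity $e^{t}A(k)=\binom{k+m}{m}$, and read off $e^{t}=1$ from the $k^m$ coefficient and $t=0$ from the $k^{m-1}$ coefficient. This buys several things: it avoids the delicate manipulation of transcendental equations at the end of the paper's proof, it makes the role of the hypothesis $m\ge 1$ completely transparent (the $k^{m-1}$ coefficient simply does not exist when $m=0$), and it recovers in the same breath the classical Fock-space condition $a\overline{b}\in 2\pi\mathrm{i}\,\Z$, which is exactly the point of contrast the theorem is meant to illustrate. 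Your argument is complete as sketched; the only step worth writing out in full is the interchange of summation justifying $\Phi_k(t)=e^tA(k)$, which is harmless since everything converges absolutely.
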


The above results say that there is a  difference between the Fock-Sobolev space and the Fock space. The operator $U_{w}$ is a key to consider the operators on $F^2$.
However, the translations do not have such good property on Fock-Sobolev space $F^{2,m}(\mathbb{C})$, since the kernel of Fock-Sobolev is very complicated. In other words, the Berezin transform of the function can almost never be computed explicitly. Thus, the approach does not work in Fock-Sobolev space.  So, we'll look at the  Berezin transform from a different angle.

\section{Semi-Commuting Toeplitz Operators}\label{s4}
In order to prove our result, we need several lemmas.
A direct calculation gives the following lemma which we shall use often.
\begin{lemma}\label{Hz}
Let $k,j$ and $l$ be non-negative integers. Then
\begin{equation*}
T_{\overline{z}^k} z^l= \left\{
  \begin{array}{ll}
    0\quad & \hbox{if $l<k$,} \\
    \frac{(l+m)!}{(l-k+m)! } z^{l-k}\quad & \hbox{if $l\geq k;$}
  \end{array}
\right.\end{equation*}
\begin{equation*}
T_{z^l}T_{\overline{z}^k}z^j=\left\{
  \begin{array}{ll}
    0\quad & \hbox{if $j<k$,} \\
   \frac{ (j+m)!}{(j-k+m)! } z^{j+l-k}\quad & \hbox{if $j\geq k;$}
  \end{array}
\right.\end{equation*}
\begin{equation*}
T_{z^l \overline{z}^k} z^j=\left\{
  \begin{array}{ll}
   0\quad & \hbox{if $j+l<k$,} \\
    \frac{ (j+l+m)!}{(j+l-k+m)! } z^{j+l-k}\quad & \hbox{if $j+l\geq k$.}
  \end{array}
\right.\end{equation*}
 \end{lemma}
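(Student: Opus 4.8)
The plan is to reduce all three identities to a single computation, exploiting that $T_\varphi f = P(\varphi f)$ together with the fact that the monomials $\{z^n\}_{n\ge 0}$ form an orthogonal system in $L^2_m$. First I would record the relevant constants. Passing to polar coordinates $z=re^{i\theta}$, the angular integral forces $\langle z^n, z^p\rangle_m = 0$ for $n\neq p$, while for $n=p$ the substitution $u=r^2$ turns the radial integral into a Gamma integral, giving
\begin{equation*}
\|z^n\|_m^2 = \frac{1}{\pi m!}\int_{\mathbb{C}}|z|^{2n+2m}e^{-|z|^2}dA(z) = \frac{(n+m)!}{m!}.
\end{equation*}
This is consistent with the stated form of $K_m(z,w)=\sum_n \frac{m!}{(n+m)!}(z\overline w)^n$, since $\frac{m!}{(n+m)!}=1/\|z^n\|_m^2$, and it is the only piece of genuine integration the proof requires.

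For the first identity I would write $T_{\overline z^k}z^l = P(\overline z^k z^l)$ and expand the projection in the orthogonal monomial basis,
\begin{equation*}
T_{\overline z^k}z^l = \sum_{n\ge 0}\frac{\langle \overline z^k z^l, z^n\rangle_m}{\|z^n\|_m^2}\, z^n .
\end{equation*}
Moving $\overline z^k$ into the second slot gives $\langle \overline z^k z^l, z^n\rangle_m = \langle z^l, z^{n+k}\rangle_m$, which by the orthogonality above vanishes unless $n+k=l$. Hence no index $n\ge 0$ contributes when $l<k$, so the operator returns $0$; and when $l\ge k$ only the term $n=l-k$ survives, yielding $\frac{\|z^l\|_m^2}{\|z^{l-k}\|_m^2}z^{l-k} = \frac{(l+m)!}{(l-k+m)!}z^{l-k}$, as claimed.

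The remaining two identities then follow with no further integration. For the second, I would use that $T_{z^l}$ acts as plain multiplication by $z^l$ on analytic monomials: if $h$ is analytic then $z^l h$ is again analytic and lies in $F^{2,m}(\mathbb{C})$, so $P(z^lh)=z^lh$; applying this to the output $T_{\overline z^k}z^j$ of the first identity inserts the factor $z^l$ and preserves the vanishing for $j<k$. For the third, I would simply note that $z^l\overline z^k z^j = \overline z^k z^{j+l}$, so that $T_{z^l\overline z^k}z^j = P(\overline z^k z^{j+l})$ is exactly the first identity with $l$ replaced by $j+l$. There is no serious obstacle here — as the paper notes, this is a \emph{direct calculation} — and the only point deserving care is the orthogonality-and-norm computation, since every displayed formula is a specialization of it. Accordingly I would isolate the norm identity $\|z^n\|_m^2=(n+m)!/m!$ as the single computational step and present the three cases as immediate consequences.
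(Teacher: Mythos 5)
Your proof is correct and follows essentially the same route as the paper: expanding the projection in the orthogonal monomial basis is the same computation as the paper's term-by-term expansion of $K_m(z,w)$ inside the integral, with the single Gamma-integral $\|z^n\|_m^2=(n+m)!/m!$ doing all the work. You additionally spell out the second and third identities (via $P(z^l h)=z^l h$ for analytic $h$ and the reduction $z^l\overline{z}^k z^j=\overline{z}^k z^{j+l}$), which the paper omits as routine.
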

 \begin{proof}
Here we only give the proof of the frist equation. By a straightforward calculus argument,
 \begin{align*}
 T_{\overline{z}^k} z^l&=\frac{1}{\pi m!} \int_{\mathbb{C}} \overline{\omega}^k \omega^l K_m(z,\omega)|\omega|^{2m}e^{-|\omega|^2}dA(\omega)\\
 &=\frac{1}{\pi m!} \sum_{\eta=0}\frac{z^\eta}{\|z^\eta\|^2_m}  \int_{\mathbb{C}} \overline{\omega}^{k+\eta} \omega^l |\omega|^{2m} e^{-|\omega|^2}dA(\omega)\\
 &= \frac{1}{\pi m!} \sum_{\eta=0}\frac{z^\eta}{\|z^\eta\|^2_m} \int_{0}^\infty r^{1+\eta+k+l+2m}e^{-r^2}dr
 \int_{0}^{2\pi} e^{i(l-k+\eta)}d\theta\\
 &= \frac{(l+m)!}{ (l-k+m)!}z^{l-k}.
 \end{align*}
This completes the proof.
 \end{proof}
\begin{lemma}\label{lemma1}
Let $\{w_i\}_{i=1}^N$ be a finite collection of non-zero complex numbers, then $\{1-K_{m}(\cdot,w_i)\}_{i=1}^N$ is linearly independent.
\end{lemma}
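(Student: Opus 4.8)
The plan is to reduce the claimed linear independence to the invertibility of a Vandermonde matrix by comparing Taylor coefficients. First I would expand each $1-K_m(\cdot,w_i)$ as a power series. Using the defining series $K_m(z,w)=\sum_{k=0}^\infty \frac{m!}{(k+m)!}(z\overline{w})^k$, the $k=0$ term equals $1$, so the constant terms cancel and
\[
1-K_m(z,w_i)=-\sum_{k=1}^\infty \frac{m!}{(k+m)!}\,\overline{w_i}^{\,k}\,z^k .
\]
In particular each of these functions vanishes at the origin and is completely determined by the numbers $\overline{w_i}^{\,k}$ for $k\geq 1$.

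Next, suppose $\sum_{i=1}^N c_i\bigl(1-K_m(\cdot,w_i)\bigr)=0$ as an element of $F^{2,m}(\mathbb{C})$. Since point evaluations are bounded on $F^{2,m}(\mathbb{C})$, norm convergence forces locally uniform convergence, so this is an identity of entire functions and I may equate the coefficient of $z^k$ on both sides. Interchanging the absolutely convergent sums then yields
\[
\sum_{i=1}^N c_i\,\overline{w_i}^{\,k}=0 \qquad\text{for every } k\geq 1 .
\]

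Finally I would extract the conclusion from the first $N$ equations $k=1,\dots,N$. Writing $\xi_i=\overline{w_i}$, these constitute the linear system $Mc=0$ with $M_{ki}=\xi_i^{\,k}$. Factoring $\xi_i$ out of the $i$-th column exhibits $M$ as a Vandermonde matrix times a diagonal matrix, so that $\det M=\bigl(\prod_{i=1}^N \xi_i\bigr)\prod_{1\leq i<j\leq N}(\xi_j-\xi_i)$, which is nonzero because the $w_i$ are distinct and nonzero (hence the $\xi_i$ are distinct and nonzero). Therefore $c_i=0$ for all $i$, which is the desired linear independence.

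The argument is essentially routine; the only steps requiring care are the passage from equality in the Hilbert space norm to equality of all Taylor coefficients, and the standing assumption that the points $w_i$ are pairwise distinct (without which the statement is false). As an alternative I could note that $1=K_m(\cdot,0)$, so the claim is equivalent to the linear independence of the reproducing kernels at the $N+1$ distinct points $0,w_1,\dots,w_N$, which is the familiar fact that reproducing kernels at distinct points of a reproducing kernel Hilbert space are linearly independent; however, the Vandermonde computation is more self-contained and fits the coefficientwise calculations used elsewhere in the paper.
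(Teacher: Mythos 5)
Your proposal is correct and follows essentially the same route as the paper: you extract the relations $\sum_i c_i\,\overline{w_i}^{\,k}=0$ for $k=1,\dots,N$ (the paper does this by pairing against $z^l$ in the inner product rather than reading off Taylor coefficients, which is the same computation) and then conclude via the nonvanishing of the Vandermonde-type determinant $\bigl(\prod_i \xi_i\bigr)\prod_{i<j}(\xi_j-\xi_i)$. Your remark that distinctness of the $w_i$ is genuinely needed matches the paper's ``without loss of generality'' reduction, so there is nothing to add.
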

\begin{proof}
Without loss of generality, assume $\{w_i\}_{i=1}^N$ is a finite collection of distinct non-zero complex numbers. Let $\sum_{i=1}^N a_i(1- K_{m}(\cdot,w_i))=0 ,$ then
$$\langle z^l , \sum_{i=1}^N a_i (1- K_{m}(z,w_i))\rangle_m=\sum_{i=1}^N a_i w_i^l=0$$ for all $l\geq1.$
Let $$M=\left(
          \begin{array}{ccc}
            w_1 & \cdots & w_N \\
            \vdots & \vdots & \vdots \\
              w_1^N  & \ldots &  w_N^N  \\
          \end{array}
        \right),\quad X=\left(
                          \begin{array}{c}
                            a_1 \\
                            \vdots \\
                            a_N \\
                          \end{array}
                        \right).
$$
So, $MX=0.$ By hypothesis, $|M|=\prod_{i=1}^N w_i \prod_{1\leq l<j\leq N}(w_j-w_l)\neq0.$ Thus, $a_i=0$ for all $1\leq i\leq N.$
\end{proof}

 Let $k,l$ and $j$ be three positive integers. Suppose $l\geq j\geq2$ and $k\geq j-1.$ For any fixed $j$, define the expression
 \begin{equation}\label{E0}
 \Xi(k)=\prod_{i=1}^j (l+k+m-j+i)=C_0+\sum_{i=1}^{j}C_i\bigg\{ \prod_{\lambda=0}^{i-1} (k+m-\lambda)\bigg\},
 \end{equation}
 where $C_i$  is a positive number independent of $k$, but dependent  on  $l,$ $m$ and $j$. Note that $C_{j-1}=jl$ and $C_j=1.$
 It is clear that
 \begin{align}\label{E4}
 \Xi(j)=\prod_{i=1}^j (l+m+i)=\sum_{i=0}^{j}C_i \frac{(m+j)!}{(m+j-i)!}.
 \end{align}

 There are non-constant functions $f$ and $g$ in the set of all finite linear combinations of kernel functions of  $F^2({\mathbb{C}})$ such that $H_{\overline{f}}^*H_{\overline{g}}=0$ on  Fock space $F^2({\mathbb{C}})$, see \cite{MA}.  But it's not true in Fock-Sobolev space, see Theorem \ref{zerop}.  This means that there is a huge difference between Fock spaces and Fock-Sobolev spaces. Before we prove this result, we establish the following special case first.
 \begin{lemma}\label{ex}
 Let $j$ and $l$ be two non-negative integers such that $l\geq j\geq2$. Let $A$ and  $B$ be two non-zero constants. For any fixed $j$, define the expression
 \begin{align}\label{E-1}
C(j,l)=&\sum_{k\geq0}\frac{m!}{(k+m)!}  \frac{(l+k+m)!}{(k+l-j+m)!}A^k B^{l+k-j}\nonumber\\
&-\sum_{0\leq k\leq j}  \frac{m!}{(k+m)!}  \frac{(j+m)!}{(k+l-j+m)!} \frac{(l+m)!}{(j-k+m)!}A^k B^{l+k-j}.
\end{align}
Suppose \begin{align*}
m!\frac{e^{AB}-q_m(AB)}{(AB)^m}=\sum_{k\geq 0}\frac{m!(AB)^k}{(k+m)!}=1,
\end{align*}
where $q_0=0$ and $q_m$ is the Taylor polynomial of $e^{AB}$ of order $m-1$ for all $m\geq1$.
Then $$C(j,l)=\sum_{k=1}^{j-1}Q_k A^k B^{l+k-j}-\sum_{i=0}^{j-2}C_i A^iB^{l+i-j}\bigg\{\sum_{k=1}^{j-i-1} \frac{m!(AB)^k}{(k+m)!}\bigg\},$$
where the $C_i$ is defined as before (\ref{E0}). Moreover,
 $$Q_k=\frac{m!}{(k+m)!}  \frac{(l+k+m)!}{(k+l-j+m)!}-\frac{m!}{(k+m)!}  \frac{(j+m)!}{(k+l-j+m)!} \frac{(l+m)!}{(j-k+m)!} $$
 for $1\leq k\leq j-1.$
 \end{lemma}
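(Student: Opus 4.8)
The plan is to reduce the stated identity to a single scalar relation and then exploit the hypothesis $\sum_{k\ge 0}\frac{m!(AB)^k}{(k+m)!}=1$. Writing $x=AB$ and using $A^kB^{l+k-j}=x^kB^{l-j}$, I factor out $B^{l-j}$ and write $C(j,l)=B^{l-j}(S_1-S_2)$, where $S_1=\sum_{k\ge0}a_kx^k$ and $S_2=\sum_{k=0}^{j}b_kx^k$ with
$$a_k=\frac{m!}{(k+m)!}\frac{(l+k+m)!}{(k+l-j+m)!},\qquad b_k=\frac{m!}{(k+m)!}\frac{(j+m)!(l+m)!}{(k+l-j+m)!(j-k+m)!}.$$
By definition $a_k-b_k=Q_k$ for $1\le k\le j-1$. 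A direct check of the two boundary coefficients gives $a_0=b_0=\frac{(l+m)!}{(l-j+m)!}$ and $b_j=1$, so that
$$S_1-S_2=\sum_{k=1}^{j-1}Q_kx^k+\Big(\sum_{k\ge j}a_kx^k-x^j\Big).$$
Thus the lemma is equivalent to the single identity $\sum_{k\ge j}a_kx^k-x^j=-\sum_{i=0}^{j-2}C_ix^i\sum_{k=1}^{j-i-1}\frac{m!x^k}{(k+m)!}$, which is what I would prove.

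To establish this, I first recognize $\frac{(l+k+m)!}{(k+l-j+m)!}=\prod_{i=1}^{j}(l+k+m-j+i)=\Xi(k)$ and insert its expansion (\ref{E0}). Since $\prod_{\lambda=0}^{i-1}(k+m-\lambda)=\frac{(k+m)!}{(k+m-i)!}$, this turns $a_k$ into $C_0\frac{m!}{(k+m)!}+\sum_{i=1}^{j}C_i\frac{m!}{(k+m-i)!}$. Summing the tail over $k\ge j$ and shifting $k=n+i$ in each term (legitimate with no negative factorials, since $k\ge j\ge i$ forces $k+m-i\ge m\ge0$) gives
$$\sum_{k\ge j}a_kx^k=C_0\sum_{k\ge j}\frac{m!x^k}{(k+m)!}+\sum_{i=1}^{j}C_ix^i\sum_{n\ge j-i}\frac{m!x^n}{(n+m)!}.$$

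Now I would feed in the hypothesis in its equivalent form $\sum_{k\ge1}\frac{m!x^k}{(k+m)!}=0$, which yields both $\sum_{k\ge 0}\frac{m!x^k}{(k+m)!}=1$ and, for every $N\ge1$, the telescoping fact $\sum_{k\ge N}\frac{m!x^k}{(k+m)!}=-\sum_{k=1}^{N-1}\frac{m!x^k}{(k+m)!}$. Applying these to the inner sums: the $i=j$ term equals $C_jx^j\sum_{n\ge0}\frac{m!x^n}{(n+m)!}=x^j$ (using $C_j=1$), cancelling the $-x^j$; the $i=j-1$ term vanishes since $\sum_{n\ge1}\frac{m!x^n}{(n+m)!}=0$; the $C_0$ term becomes $-C_0\sum_{k=1}^{j-1}\frac{m!x^k}{(k+m)!}$ (the $i=0$ summand), and the terms $1\le i\le j-2$ become $-\sum_{i=1}^{j-2}C_ix^i\sum_{k=1}^{j-i-1}\frac{m!x^k}{(k+m)!}$. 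These assemble precisely into $-\sum_{i=0}^{j-2}C_ix^i\sum_{k=1}^{j-i-1}\frac{m!x^k}{(k+m)!}$, and restoring the factor $B^{l-j}$ finishes the argument.

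The computational heart, and the only place demanding care, is the bookkeeping of this last step: one must see that the single relation $\sum_{k\ge1}\frac{m!x^k}{(k+m)!}=0$ is exactly the tool that collapses every infinite tail $\sum_{k\ge j}$ into the finite partial sums $\sum_{k=1}^{j-i-1}$ on the right, and that the index ranges match with no leftover terms, the boundary values $a_0=b_0$, $b_j=1$ and $C_j=1$ being precisely what the cancellations require. I expect verifying these boundary identities and the summation-index shift, rather than any deeper idea, to be the main obstacle.
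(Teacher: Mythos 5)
Your proof is correct and takes essentially the same route as the paper's: both expand $\frac{(l+k+m)!}{(k+l-j+m)!}=\Xi(k)$ via (\ref{E0}), shift the summation index, and use the hypothesis $E=1$ to collapse each infinite tail into a negated finite partial sum, producing exactly $-M$. The only difference is bookkeeping: you keep the $k=j$ term inside the tail so that the $i=j$ and $i=j-1$ pieces cancel the $-x^j$ coming from $b_j=1$ automatically, whereas the paper isolates the coefficient of $A^jB^l$ up front and verifies separately, via (\ref{E4}), that it vanishes.
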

 \begin{proof}
 For simplicity, we write
 $$J=\sum_{k=1}^{j-1}Q_k A^k B^{l+k-j},\quad E =m!\frac{e^{AB}-q_m(AB)}{(AB)^m},$$
 $$M=\sum_{i=0}^{j-2}C_i A^iB^{l+i-j}\bigg\{\sum_{k=1}^{j-i-1} \frac{m!(AB)^k}{(k+m)!}\bigg\},$$
 and
 $$I=\sum_{k\geq j+1}\frac{m!}{(k+m)!}  \frac{(l+k+m)!}{(k+l-j+m)!}A^k B^{l+k-j}.$$
In fact, for $l\geq j\geq 2,$
\begin{equation}\label{E1}
C(j,l)=I+J+(\prod_{i=1}^j \frac{l+m+i}{m+i}-1)A^j B^l.
 \end{equation}
 By (\ref{E0}) and a simple computation,
 \begin{align*}
I&=\sum_{k\geq j+1}\frac{m!}{(k+m)!}\bigg(\prod_{i=1}^j (l+k+m-j+i)\bigg)   A^k B^{l+k-j}\nonumber\\
 &=\sum_{k\geq j+1}\frac{m!}{(k+m)!}\bigg\{C_0+\sum_{i=1}^{j}C_i \prod_{\lambda=0}^{i-1} (k+m-\lambda)\bigg\} A^k B^{l+k-j}\nonumber\\
 &=\sum_{i=0}^jC_i\bigg\{ E-\sum_{k=0}^{j-i} \frac{m!(AB)^k}{(k+m)!} \bigg\}A^iB^{l+i-j}\nonumber\\
 &=\sum_{i=0}^{j-1}C_i\bigg\{ E-1-\sum_{k=1}^{j-i} \frac{m!(AB)^k}{(k+m)!} \bigg\}A^iB^{l+i-j}+C_j (E-1)A^jB^l\nonumber\\
 \end{align*}
 where the $C_i$ is defined as before (\ref{E0}). Since $E=1$, then
  \begin{align}\label{E2}
  I&=-\sum_{i=0}^{j-1}C_i A^iB^{l+i-j}\bigg\{\sum_{k=1}^{j-i} \frac{m!(AB)^k}{(k+m)!}\bigg\}\nonumber\\
 &=-\sum_{i=0}^{j-2}C_i A^iB^{l+i-j}\bigg\{\sum_{k=1}^{j-i-1} \frac{m!(AB)^k}{(k+m)!}\bigg\}-\sum_{i=0}^{j-1} \frac{m!C_i}{(j-i+m)!}A^j B^l,
   \end{align}
 By  (\ref{E1}) and (\ref{E2}),

 \begin{align*}
C(j,l)&=J-M+\bigg\{\prod_{i=1}^j \frac{l+m+i}{m+i}-1-\sum_{i=0}^{j-1} \frac{m!C_i}{(j-i+m)!}\bigg\}A^j B^l.
 \end{align*}
Since $C_j=1,$ it is immediate from (\ref{E4}) that
\begin{align*}
&\quad\prod_{i=1}^j \frac{l+m+i}{m+i}-1-\sum_{i=0}^{j-1} \frac{m!C_i}{(j-i+m)!}\\
&=\prod_{i=1}^j\frac{1}{m+i}\bigg\{\prod_{i=1}^j (l+m+i)-\prod_{i=1}^j(m+i)-\sum_{i=0}^{j-1} \frac{C_i (m+j)!}{(j-i+m)!}\bigg\}\\
&=\prod_{i=1}^j\frac{1}{m+i}\bigg\{\prod_{i=1}^j (l+m+i)-\sum_{i=0}^{j} \frac{C_i (m+j)!}{(j-i+m)!}\bigg\}\\
&=0.
 \end{align*}
Thus $C(j,l)=J-M.$
This is the desired result.
 \end{proof}
 The coefficients of $A^k B^{l+k-j}$ can almost never be computed explicitly for all $l\geq j\geq2, 1\leq k\leq j.$ Lemma \ref{ex} shows that the coefficient of  $A^jB^l$ is zero. Fixed $j\geq2,$ we now  consider the coefficient of $A^{j-1} B^{l-1}$ for all $l\geq j.$
  \begin{lemma}\label{re1}
For $l\geq j\geq2,$ we let $ \Theta$ denote the coefficient of $ A^{j-1} B^{l-1}.$
Then $\Theta\neq0$ if $m\neq0.$
 \end{lemma}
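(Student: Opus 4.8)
The plan is to extract the coefficient $\Theta$ of $A^{j-1}B^{l-1}$ directly from the closed form $C(j,l)=J-M$ furnished by Lemma \ref{ex}, and then to collapse it into an elementary rational expression in $j,l,m$ that is visibly nonzero.

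First I would locate every place where the monomial $A^{j-1}B^{l-1}$ can occur. In $J=\sum_{k=1}^{j-1}Q_kA^kB^{l+k-j}$ the exponent of $A$ forces $k=j-1$, so $J$ contributes exactly $Q_{j-1}$. In $M=\sum_{i=0}^{j-2}C_iA^iB^{l+i-j}\bigl\{\sum_{k=1}^{j-i-1}\frac{m!(AB)^k}{(k+m)!}\bigr\}$ the generic term is a multiple of $A^{i+k}B^{l+i-j+k}$, so $A^{j-1}B^{l-1}$ arises precisely when $k=j-1-i$, and for each $i$ with $0\le i\le j-2$ this value lies in the admissible range $1\le k\le j-i-1$. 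Hence $M$ contributes $\sum_{i=0}^{j-2}C_i\frac{m!}{(j-1-i+m)!}$, giving
$$\Theta=Q_{j-1}-\sum_{i=0}^{j-2}C_i\frac{m!}{(j-1-i+m)!}.$$

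The decisive step is to recognize the $C_i$-sum through the defining identity (\ref{E0}). Evaluating $\Xi$ at $k=j-1$ gives $\Xi(j-1)=\prod_{i=1}^{j}(l+m-1+i)=\prod_{s=0}^{j-1}(l+m+s)$, while the right-hand side of (\ref{E0}) reads $\Xi(j-1)=\sum_{i=0}^{j}C_i\frac{(j-1+m)!}{(j-1-i+m)!}$. Multiplying by $\frac{m!}{(j-1+m)!}$ turns this into $\frac{m!}{(j-1+m)!}\Xi(j-1)=\sum_{i=0}^{j}C_i\frac{m!}{(j-1-i+m)!}$, whose $i=0,\dots,j-2$ portion is exactly the sum in $\Theta$; the two leftover terms are $C_{j-1}=jl$ (at $i=j-1$) and $C_j\,m=m$ (at $i=j$), using the normalizations $C_{j-1}=jl$, $C_j=1$. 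Since $\frac{m!}{(j-1+m)!}\Xi(j-1)$ is precisely the first summand of $Q_{j-1}$, that summand cancels and I am left with
$$\Theta=jl+m-\frac{m!}{(j-1+m)!}\frac{(j+m)!}{(1+m)!}\,(l+m).$$

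Finally I would simplify $\frac{m!}{(j-1+m)!}\frac{(j+m)!}{(1+m)!}=\frac{j+m}{m+1}$ and clear denominators; a one-line expansion gives $(m+1)\Theta=m(j-1)(l-1)$, so that $\Theta=\frac{m(j-1)(l-1)}{m+1}$. Because $m\ge1$ and $l\ge j\ge2$, every factor is strictly positive, whence $\Theta>0$ and in particular $\Theta\neq0$. I expect the only delicate point to be the bookkeeping of the preceding paragraph---correctly isolating the $i=j-1$ and $i=j$ terms and matching the remaining sum against the first term of $Q_{j-1}$; once that identification is in place, the conclusion is immediate arithmetic. It is worth noting that the same computation yields $\Theta=0$ when $m=0$, which is consistent with the existence of the Fock-space counterexamples recorded in \cite{MA} and highlights the role of the hypothesis $m\neq0$.
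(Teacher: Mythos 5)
Your proposal is correct and follows essentially the same route as the paper: both extract $\Theta=Q_{j-1}-\sum_{i=0}^{j-2}C_i\frac{m!}{(j-1-i+m)!}$ from the decomposition $C(j,l)=J-M$ of Lemma \ref{ex}, evaluate the identity (\ref{E0}) at $k=j-1$ to rewrite the $C_i$-sum, cancel against the first term of $Q_{j-1}$, and arrive at $\Theta=\frac{m(j-1)(l-1)}{m+1}$. The bookkeeping of the leftover terms $C_{j-1}=jl$ and $C_j\,m=m$ and the final arithmetic all check out.
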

 \begin{proof}
 It follows from Lemma  \ref{ex} that $$\Theta=Q_{j-1}-\sum_{i=0}^{j-2}C_i \frac{m!}{(j-i-1+m)!},$$
    where $C_i$ as defined in (\ref{E0}).
It is obvious that  \begin{align*}
 Q_{j-1}&=\frac{m!}{(j-1+m)!}\frac{(l+j-1+m)!}{(l-1+m)!}-\frac{m!}{(j-1+m)!}\frac{(j+m)!(l+m)}{(1+m)!}\\
&=\prod_{\lambda=1}^{j-1}\frac{1}{m+\lambda}\bigg\{\prod_{i=1}^j (l+m+i-1)-\frac{(j+m)!(l+m)}{(1+m)!}\bigg\}.
\end{align*}
By (\ref{E0}), we have
\begin{align*}
\Xi(j-1)&=\prod_{i=1}^{j}(l+m+i-1)=C_0+\sum_{i=1}^{j}C_i\bigg\{ \prod_{\lambda=0}^{i-1} (j-1+m-\lambda)\bigg\}\\
&=\sum_{i=0}^{j-2}C_i \frac{(m+j-1)!}{(j-i-1+m)!}+C_{j-1}\prod_{i=1}^{j-1}(m+i)+\prod_{i=0}^{j-1}(m+i),
\end{align*}
then
 \begin{align*}
&\quad \sum_{i=0}^{j-2}C_i \frac{m!}{(j-i-1+m)!}\\
&=\prod_{\lambda=1}^{j-1} \frac{1}{m+\lambda}\bigg\{\sum_{i=0}^{j-2}C_i \frac{(m+j-1)!}{(j-i-1+m)!}\bigg\}\\
&=\prod_{\lambda=1}^{j-1} \frac{1}{m+\lambda}\bigg\{ \prod_{i=1}^{j}(l+m+i-1)\\
&\quad-C_{j-1}\prod_{i=1}^{j-1}(m+i)-\prod_{i=0}^{j-1}(m+i)\bigg\}.
  \end{align*}
By above equations, then
  \begin{align*}\Theta&=\prod_{\lambda=1}^{j-1}\frac{1}{m+\lambda}\bigg\{\prod_{i=1}^{j}(l+m+i-1)-\frac{(j+m)!(l+m)}{(1+m)!}
\\
&\quad-\prod_{i=1}^{j}(l+m+i-1)+C_{j-1}\prod_{i=1}^{j-1}(m+i)+
  \prod_{i=0}^{j-1}(m+i)\bigg\}\\
  &=\prod_{\lambda=1}^{j-1}\frac{1}{m+\lambda}\bigg\{-(l+m)\prod_{i=2}^j(m+i)+jl\prod_{i=1}^{j-1}(m+i)+
  \prod_{i=0}^{j-1}(m+i)\bigg\}\\
&=\prod_{\lambda=1}^{j-1}\frac{1}{m+\lambda}\prod_{i=1}^{j-1}(m+i)\bigg\{-\frac{(l+m)(j+m)}{m+1}+jl+m\bigg\}\\
&=\frac{m(j-1)(l-1)}{m+1}
  \end{align*}
  and hence we have $\Theta\neq0$ if $m\neq0.$
 \end{proof}
 Lemma \ref{re1} shows that the coefficient of $A^{j-1} B^{l-1}$ is non-zero if $m\neq0$.  Easily modifying the proof of Theorem 9 in \cite{MA}, we get $H_{\overline{f}}^* H_{\overline{g}}=0$ if and only if
 $\widetilde{fg}=f\overline{g}.$ So, we omit its proof here. Recall that  $D$ is the set of all finite linear combinations of kernel functions in $F^{2,m}(\mathbb{C}).$ We now prove Theorem \ref{A}.

 \begin{theorem}\label{zerop}
Let $f$ and $g$ be functions in $D.$ Suppose $m\neq0$, then $H_{\overline{f}}^* H_{\overline{g}}=(T_f,T_{\overline{g}}]=0$ on $F^{2,m}(\mathbb{C})$ if and only if at least one of  $f$ and $g$ is a constant function.
 \end{theorem}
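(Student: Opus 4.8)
The plan is to treat the two directions of the equivalence quite asymmetrically. The implication ``one of $f,g$ is constant $\Rightarrow$ the semi-commutator vanishes'' is immediate: if $g\equiv c$ then for every holomorphic $h$ one has $H_{\overline g}h=(I-P)(\overline c\,h)=\overline c\,(I-P)h=0$, so $H_{\overline g}=0$ and a fortiori $H_{\overline f}^{*}H_{\overline g}=0$; symmetrically, $f\equiv c$ forces $H_{\overline f}=0$, hence $H_{\overline f}^{*}=0$. All the content is therefore in the converse, which I would prove in contrapositive form. So I assume $H_{\overline f}^{*}H_{\overline g}=(T_f,T_{\overline g}]=0$ and, after collecting repeated frequencies and discarding vanishing coefficients, write $f$ and $g$ as in \ref{1}, say $f=\sum_{k=1}^{N_1}a_kK_m(\cdot,A_k)$ and $g=\sum_{j=1}^{N_2}b_jK_m(\cdot,B_j)$ with the $A_k$ distinct, the $B_j$ distinct, and all $a_k,b_j\neq0$. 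Since $K_m(\cdot,0)\equiv1$, non-constancy of $f$ and $g$ means precisely that some $A_k\neq0$ and some $B_j\neq0$, and the goal is to reach a contradiction.

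The engine of the argument is to test the operator identity on the monomials $z^l$, $l\ge 0$, which span a dense subspace, and then to read off coefficients. Expanding the kernels as $K_m(z,A)=\sum_{p}\tfrac{m!}{(p+m)!}\overline{A}^{\,p}z^{p}$ and $\overline{K_m(z,B)}=\sum_{q}\tfrac{m!}{(q+m)!}B^{q}\overline{z}^{\,q}$ turns both $T_{f\overline g}$ and $T_fT_{\overline g}$ into explicit series in the symbols $z^{p}\overline{z}^{\,q}$, and Lemma \ref{Hz} evaluates every resulting term in closed form. Thus $(T_{f\overline g}-T_fT_{\overline g})z^l=\sum_{s}\gamma_s(l)\,z^{s}$, where each coefficient $\gamma_s(l)$ is a finite sum over the pairs $(A_k,B_j)$ of the factorial expressions supplied by Lemma \ref{Hz}. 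The hypothesis is exactly that $\gamma_s(l)=0$ for all output exponents $s$ and all $l\ge 0$; this is the system of equations in $a_k,A_k,b_j,B_j$ advertised in the introduction, and the proof amounts to showing that it has no solution with $f,g$ non-constant once $m\neq0$.

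I would organize the solution of this system in two steps. In \emph{step 1} I extract, from the equations for the lowest output monomials, the structural constraints on the frequencies, separating the contributions of distinct kernels by means of the linear independence in Lemma \ref{lemma1} (of the functions $1-K_m(\cdot,w_i)$, via the Vandermonde determinant there). This forces, for each surviving frequency pair, the resonance identity $E=\sum_{p\ge 0}\tfrac{m!}{(p+m)!}(A_kB_j)^{p}=1$ appearing in Lemma \ref{ex}; specializing to $m=0$ this collapses to $A_kB_j\in 2\pi\mathrm{i}\,\Z$, which recovers the arithmetic constraint of Theorem \ref{A} and so reproves the Bauer--Choe--Koo result more directly than \cite{Bauer1}. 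In \emph{step 2} I use that once $E=1$ holds, the residual obstruction to cancellation is governed by the quantity $C(j,l)$ of Lemma \ref{ex}: there the top contribution (the coefficient of $A^{j}B^{l}$) cancels automatically, so that $C(j,l)$ reduces to $J-M$, but the next coefficient, that of $A^{j-1}B^{l-1}$, is computed in Lemma \ref{re1} to equal $\Theta=\tfrac{m(j-1)(l-1)}{m+1}$. Choosing $j\ge 2$ and $l\ge j$, this is nonzero whenever $m\neq0$, so the corresponding $\gamma_s(l)$ cannot vanish --- contradicting the standing assumption. Hence no non-constant pair $f,g$ can satisfy the identity when $m\neq 0$, which is the desired conclusion.

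I expect the genuine difficulty to concentrate in two coupled places. The first is the disentangling of the frequency pairs: because distinct pairs $(A_k,B_j)$ feed the same output monomial, isolating the single resonant pair to which Lemma \ref{ex} applies requires a careful separation argument, for which the linear independence of Lemma \ref{lemma1} is the indispensable tool, and making this rigorous for arbitrary $N_1,N_2$ is the most delicate bookkeeping in the proof. The second, and conceptually decisive, point is the combinatorial identity underlying Lemmas \ref{ex} and \ref{re1}, built from the polynomial $\Xi(k)$ of \ref{E0} and the resonance normalization $E=1$: the entire theorem turns on the surviving factor $\tfrac{m(j-1)(l-1)}{m+1}$, which vanishes \emph{exactly} at $m=0$. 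This single algebraic fact explains why the Ma--Yan--Zheng--Zhu counterexamples persist on $F^{2}$ but evaporate on $F^{2,m}$ for every $m>0$, and verifying it is where I would expect to spend the bulk of the effort.
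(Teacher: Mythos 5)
Your easy direction and your Step 1 match the paper: testing $T_{f\overline g}-T_fT_{\overline g}$ on monomials, reading off coefficients of the output powers, and using the Vandermonde/linear-independence argument of Lemma \ref{lemma1} at the lowest output exponent to force the resonance condition $K_m(B_\lambda,A_i)=1$ for every pair is exactly what the paper does, and your remark that the $m=0$ specialization recovers the arithmetic constraint of Theorem \ref{A} is also the paper's. The gap is in your Step 2. The hypothesis does not give you the vanishing of $C(j,l)$ for a single pair; it gives (equation (\ref{E-2}) in the paper) the vanishing of the weighted sum $\sum_{i,\lambda}a_i\overline b_\lambda C(j,l,i,\lambda)$ over all $N_1N_2$ pairs. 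The nonvanishing of the coefficient $\Theta=\tfrac{m(j-1)(l-1)}{m+1}$ of $\overline A_i^{\,j-1}B_\lambda^{\,l-1}$ inside each $C(j,l,i,\lambda)$ therefore contradicts nothing: for $j=2$ it only yields the new identity $\Theta\sum_{i,\lambda}a_i\overline b_\lambda\overline A_iB_\lambda^{\,l-1}=0$, and this bilinear sum factors as $\bigl(\sum_ia_i\overline A_i\bigr)\bigl(\sum_\lambda\overline b_\lambda B_\lambda^{\,l-1}\bigr)$, which can vanish with every $a_i,b_\lambda,A_i,B_\lambda$ nonzero. Your argument closes only in the case $N_1=N_2=1$.

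What is missing is the entire second half of the paper's proof (its Steps 3 and 4): an iteration on $j\ge2$ which, using $\Theta\neq0$, the vanishing of the top coefficient of $\overline A_i^{\,j}B_\lambda^{\,l}$ from Lemma \ref{ex}, and the identities already obtained at smaller $j$, extracts $\sum_{i,\lambda}a_i\overline b_\lambda\overline A_i^{\,j-1}B_\lambda^{\,l-1}=0$ for all $l\ge j\ge2$; then the specialization to the diagonal, which factors the sum and gives $\sum_ia_i\overline A_i^{\,l}=0$ or $\sum_\lambda\overline b_\lambda B_\lambda^{\,l}=0$ for all $l\ge1$; and finally a separate Vandermonde argument showing that, say, $\sum_ia_i\overline A_i^{\,l}=0$ for all $l\ge1$ forces the frequencies $A_i$ to collapse and $f$ to reduce to a constant. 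You flag ``delicate bookkeeping for arbitrary $N_1,N_2$'' as the difficulty, but what is absent is not bookkeeping --- it is the actual mechanism by which non-constancy is refuted, and without it the proof does not close.
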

\begin{proof}
It is clear that $H_{\overline{f}}^* H_{\overline{g}}=0$ if one of $f$ and $g$ is constant function.

Conversely, suppose $H_{\overline{f}}^* H_{\overline{g}}=0,$ then $T_f T_{\overline{g}}=T_{f\overline{g}}$ by (\ref{l111}).  We set
$f(z)=\sum_{i=1}^{N_1} a_i K_m(z,A_i)$ and $g(z)=\sum_{\lambda=1}^{N_2} b_\lambda K_m(z,B_\lambda),$ where $N_1$, $N_2$ are finite positive integers, and $a_i,A_i,b_\lambda,B_\lambda\in \mathbb{C}$. We can  rewrite $$f(z)=\sum_{i=1}^{N_1}\sum_{k_i=0}^\infty a_i\frac{m!}{(k_i+m)!}\overline{A}_i^{k_i}z^{k_i}$$ and
$$g(z)=\sum_{\lambda=1}^{N_2}\sum_{\ell_\lambda=0}^\infty b_\lambda\frac{m!}{(\ell_\lambda+m)!}\overline{B}_\lambda^{\ell_\lambda}z^{\ell_\lambda}.$$
By Lemma \ref{Hz}, we have
\begin{align*}
T_{f(z)} T_{\overline{g}(z)}z^l&=\sum_{i=1}^{N_1}\sum_{\lambda=1}^{N_2}\sum_{k_i=0}^\infty\sum_{0\leq\ell_\lambda\leq l} a_i \overline{b}_\lambda \frac{m!}{(k_i+m)!}\frac{m!}{(\ell_\lambda+m)!} \\
&\quad \times\frac{(l+m)!}{(l-\ell_\lambda+m)!} \overline{A}_i^{k_i}B_\lambda^{\ell_\lambda}z^{l+k_i-\ell_\lambda}
\end{align*}
and \begin{align*}
 T_{f(z)\overline{g}(z)}z^l&=\sum_{i=1}^{N_1}\sum_{\lambda=1}^{N_2}\sum_{k_i=0}^\infty \sum_{0\leq\ell_\lambda\leq l+k_i} a_i \overline{b}_\lambda \frac{m!}{(k_i+m)!}\frac{m!}{(\ell_\lambda+m)!} \\
&\quad \times\frac{(l+k_i+m)!}{(l+k_i-\ell_\lambda+m)!} \overline{A}_i^{k_i}B_\lambda^{\ell_\lambda}z^{l+k_i-\ell_\lambda}.
 \end{align*}

Let $l+k_i-\ell_\lambda=j\geq0,$ then $\ell_\lambda=l+k_i-j$ and $k_i\geq \text{max}\{0, j-l\}.$ If $\ell_\lambda\leq l$, then $k_i\leq j$.
By the above two equations, we obtain $T_f T_{\overline{g}}=T_{f\overline{g}}$ if and only if
\begin{align}\label{QQQQ}
& \quad\sum_{i=1}^{N_1}\sum_{\lambda=1}^{N_2} \sum_{max(0,j-l)\leq k_i\leq j}a_i \overline{b}_\lambda  \frac{m!}{(k_i+m)!}  \frac{(j+m)!}{(k_i+l-j+m)!} \nonumber\\
&\quad \times \frac{(l+m)!}{(j-k_i+m)!}\overline{A}_i^{k_i} B_\lambda^{l+k_i-j}\nonumber\\
&= \sum_{i=1}^{N_1}\sum_{\lambda=1}^{N_2} \sum_{max(0,j-l)\leq k_i}^\infty a_i \overline{b}_\lambda\frac{m!}{(k_i+m)!}  \frac{(l+k_i+m)!}{(k_i+l-j+m)!}\overline{A}_i^{k_i} B_\lambda^{l+k_i-j}
\end{align}
for all $j,l\geq0.$

We define
\begin{align*}
C(j,l,i,\lambda)&=\sum_{0\leq k_i}^\infty \frac{m!}{(k_i+m)!}  \frac{(l+k_i+m)!}{(k_i+l-j+m)!}\overline{A}_i^{k_i} B_\lambda^{l+k_i-j}\\
&\quad-\sum_{0\leq k_i\leq j}\frac{m!}{(k_i+m)!}  \frac{(j+m)!}{(k_i+l-j+m)!} \frac{(l+m)!}{(j-k_i+m)!}\overline{A}_i^{k_i} B_\lambda^{l+k_i-j}.
\end{align*}
Note that $C(j,l,i,\lambda)$ is independent of $i$ and $\lambda$. In other words, if we set $k=k_i$, $A=\overline{A}_i$ and $B=B_\lambda$ in (\ref{E-1}), then $C(j,l,i,\lambda)=C(j,l),$ where $C(j,l)$ as defined in (\ref{E-1}).

Without loss of generality, assume $j\leq l$.
By (\ref{QQQQ}), we obtain
\begin{equation}\label{E-2}
0=\sum_{i=1}^{N_1}\sum_{\lambda=1}^{N_2}a_i\overline{b}_\lambda C(j,l,i,\lambda).
\end{equation}

Because of its length, the proof will be divided into several steps.\\
\textbf{Step 1.} We claim that $$1=m! \frac{e^{\overline{A}_iB_\lambda}-q_m(\overline{A}_iB_\lambda)}{(\overline{A}_iB_\lambda)^m}, \quad 1\leq i \leq N_1, 1\leq\lambda \leq N_2.$$

To prove this claim, assume $a_i,A_i, b_\lambda,B_\lambda\neq0$ where $1\leq i\leq N_1$ and $1\leq j\leq N_2$. If $l\geq j=0$, then (\ref{E-2}) becomes
\begin{align}\label{QQQQQQ}
\sum_{i=1}^{N_1}\sum_{\lambda=1}^{N_2} a_i \overline{b}_\lambda  B_\lambda^{l}
&=\sum_{i=1}^{N_1}\sum_{\lambda=1}^{N_2} a_i \overline{b}_\lambda B_\lambda ^l m! \frac{e^{\overline{A}_iB_\lambda}-q_m(\overline{A}_iB_\lambda)}{(\overline{A}_iB_\lambda)^m}
\end{align}
for all $l\geq0.$ By (\ref{QQQQQQ}),
\begin{equation}\label{equation1}
\sum_{i=1}^{N_1}\sum_{\lambda=1}^{N_2}   a_i\overline{b}_\lambda (1-m!\frac{e^{\overline{A}_iB_\lambda}-q_m(\overline{A}_iB_\lambda)}{(\overline{A}_iB_\lambda)^m})B_\lambda^{l}=0, \quad l\geq0.
\end{equation}
Now assume $\{B_\lambda\}$ be a finite collection of distinct non-zero complex numbers. Let
   $$E_1=(c_{k,\lambda})_{1\leq k,\lambda\leq N_2}=\bigg( \sum_{i=1}^{N_1} a_i B_\lambda^k (1-m!\frac{e^{\overline{A}_iB_\lambda}-q_m(\overline{A}_iB_\lambda)}{(\overline{A}_iB_1\lambda)^m}\bigg)_{1\leq k,\lambda\leq N_2}$$
and $$X_1=\left(
         \begin{array}{c}
           b_1 \\
           \vdots \\
           b_{N_2} \\
         \end{array}
       \right).
$$ It follows from (\ref{equation1}) that $E_1X_1=0$. By the property of Vandermonde determinant,
\begin{align*}
0&=|E_1|=  \prod_{1\leq \iota< \kappa\leq N_2 }^{N_2}(B_\iota-B_\kappa) \prod_{\lambda=1}^{N_2}B_\lambda\bigg(\sum_{i=1}^{N_1} a_i  (1-m!\frac{e^{\overline{A}_iB_\lambda}-q_m(\overline{A}_iB_\lambda)}{(\overline{A}_iB_\lambda)^m})\bigg) \\
&=\prod_{\lambda=1}^{N_2}\bigg(\sum_{i=1}^{N_1} a_i  (1-m!\frac{e^{\overline{A}_iB_\lambda}-q_m(\overline{A}_iB_\lambda)}{(\overline{A}_iB_\lambda)^m})\bigg).
\end{align*}
 This implies there is a $\kappa$ so that $$\sum_{i=1}^{N_1} a_i  (1-m!\frac{e^{\overline{A}_iB_\kappa}-q_m(\overline{A}_iB_\kappa)}{(\overline{A}_iB_\kappa)^m}=0.$$
 By the same way, we have  $$\sum_{i=1}^{N_1} a_i  (1-m!\frac{e^{\overline{A}_iB_\lambda}-q_m(\overline{A}_iB_\lambda)}{(\overline{A}_iB_\lambda)^m})=\sum_{i=1}^{N_1} a_i (1-K_m(B_\lambda,A_i))=0 $$ for all
$1\leq \lambda \leq N_2.$

Since every $a_i$ is non-zero constant, then, by Lemma \ref{lemma1}, $$1-K_m(B_\lambda,A_i)=1-m!\frac{e^{\overline{A}_iB_\lambda}-q_m(\overline{A}_iB_\lambda)}{(\overline{A}_iB_\lambda)^m}=0$$ for all
$1\leq \lambda \leq N_2.$  In this case, $B_\lambda$ will be the
independent variable.

In fact, if we set $j\geq l=0,$ then, by (\ref{QQQQ}),
\begin{align*}\label{QQQQQ}
\sum_{i=1}^{N_1}\sum_{\lambda=1}^{N_2}   a_i\overline{b}_\lambda \overline{A}_i^{j}
&= \sum_{i=1}^{N_1}\sum_{\lambda=1}^{N_2}   a_i\overline{b}_\lambda \overline{A}_i^{j}m! \frac{e^{\overline{A}_iB_\lambda}-q_m(\overline{A}_iB_\lambda)}{(\overline{A}_iB_\lambda)^m}
\end{align*}
for all $j\geq0.$ Exchanging the roles of $A_i$ and $B_\lambda$, we see that
$$1-m!\frac{e^{\overline{A}_iB_\lambda}-q_m(\overline{A}_iB_\lambda)}{(\overline{A}_iB_\lambda)^m}=0$$ for all
$1\leq i \leq N_1.$
This shows that
\begin{equation*}
m! \frac{e^{\overline{A}_iB_\lambda}-q_m(\overline{A}_iB_\lambda)}{(\overline{A}_iB_\lambda)^m}=1, \quad 1\leq i \leq N_1, 1\leq\lambda \leq N_2.
\end{equation*}
Now we can prove this theorem by Lemma \ref{ex}.\\
\textbf{Step 2.}
To prove that \begin{align*}
\sum_{i=1}^{N_1}\sum_{\lambda=1}^{N_2}a_i\overline{b}_\lambda \overline{A}_i B_\lambda^{l-1}=0
\end{align*} for all $l\geq2$.

Let us now assume $l\geq j=2.$  Using (\ref{E-2}), Lemma \ref{ex} and \ref{re1}, we have
\begin{align*}
0&= \sum_{i=1}^{N_1}\sum_{\lambda=1}^{N_2} a_i \overline{b}_\lambda\frac{m(l-1)\overline{A}_i B_\lambda^{l-1}}{m+1}
\end{align*}
for all $l\geq2$. It follows that \begin{align}\label{ex1}
\sum_{i=1}^{N_1}\sum_{\lambda=1}^{N_2}a_i\overline{b}_\lambda \overline{A}_i B_\lambda^{l-1}=0
\end{align} for all $l\geq2$. This completes Step 2.\\
\textbf{Step 3.} We claim $$\sum_{i=1}^{N_1}a_i\overline{A}_i^{l}=0\quad \text{or}\quad \sum_{\lambda=1}^{N_2} \overline{b}_\lambda B_{\lambda}^{l}=0$$
for all $l\geq1.$

To prove this claim,  we assume $l\geq j=3$. By lemma \ref{ex} and (\ref{E-2}), $$0=\sum_{i=1}^{N_1}\sum_{\lambda=1}^{N_2}a_i\overline{b}_\lambda\bigg\{\eta_1\overline{A}_iB_{\lambda}^{l-2}+\eta_2\overline{A}_i^2B_{\lambda}^{l-1}+
\eta_3\overline{A}_i^3B_{\lambda}^{l}
\bigg\}$$ where
$\eta_k$ is the coefficient of $\overline{A}_i^kB_\lambda^{l+k-3}(1\leq k\leq3)$. Note that $\eta_k$ is independent
of $i$ and $\lambda.$
By  Lemma \ref{re1} and (\ref{ex1}), we have  $$\sum_{i=1}^{N_1}\sum_{\lambda=1}^{N_2}a_i\overline{b}_\lambda\eta_1\overline{A}_iB_{\lambda}^{l-2}=0,\quad \eta_2\neq0,\quad \eta_3=0$$
for all $l\geq3.$
Then $$\sum_{i=1}^{N_1}\sum_{\lambda=1}^{N_2}a_i\overline{b}_\lambda \overline{A}_i^2 B_\lambda^{l-1}=0,\quad l\geq3.$$
By the iteration method,  we get
\begin{align*}
\sum_{i=1}^{N_1}\sum_{\lambda=1}^{N_2}a_i\overline{b}_\lambda \overline{A}_i^{j-1}B_{\lambda}^{l-1}=0
\end{align*} for all
$l\geq j\geq2.$ In  particular, we have
\begin{align*}
\sum_{i=1}^{N_1}\sum_{\lambda=1}^{N_2}a_i\overline{b}_\lambda \overline{A}_i^{l}B_{\lambda}^{l}=0
\end{align*}
for all $l\geq1.$ This shows that $$\sum_{i=1}^{N_1}a_i\overline{A}_i^{l}=0\quad \text{or}\quad \sum_{\lambda=1}^{N_2} \overline{b}_\lambda B_{\lambda}^{l}=0$$
for all $l\geq1.$ This completes Step 3.\\
\textbf{Step 4.} We now complete the proof of the theorem. For any $l\geq1,$ we do only the case $\sum_{i=1}^{N_1}a_i\overline{A}_i^{l}=0.$ If $\sum_{i=1}^{N_1}a_i\overline{A}_i^{l}=0$ for all $l\geq1$, then we have $D_1X_1=0$, where $$D_1=\left(
                                                                \begin{array}{cccc}
                                                                  \overline{A}_1 & \cdots & \cdots & \overline{A}_{N_1} \\
                                                                  \overline{A}_1^2 &\cdots & \cdots & \overline{A}_{N_1}^2 \\
                                                                  \vdots & \vdots & \vdots & \vdots \\
                                                                  \overline{A}_1^{N_1} & \cdots & \cdots &  \overline{A}_{N_1}^{N_1}\\
                                                                \end{array}
                                                              \right), \quad X_1=(a_1,a_2\cdots,a_{N_1})^T.$$
Note that if $N_1=1,$ then $a_1\overline{A}_1=0$. This contradiction shows that at least one of $a_1$ and $A_1$ is 0, then $f$ is a constant function since $f(z)=a_1K(z,A_1).$

Suppose $N_1\geq 2.$ Since $a_i\neq0$ for $1\leq i \leq N_1,$ then $|D_1|=0.$ In fact, $|D_1|$ is a Vandermond determinant with the first row removed, then $$\prod_{i=1}^{N_1}\overline{A}_i\prod_{1\leq \iota< \kappa \leq N_1}(\overline{A}_\kappa-\overline{A}_\iota)=0.$$
Since $A_i\neq0$ for $1\leq i \leq N_1,$ then there are $\kappa_1$ and $\kappa_2$ such that $A_{\kappa_1}=A_{\kappa_2}$ where $\kappa_1\neq\kappa_2$ and $1\leq\kappa_1,\kappa_2\leq N_1$. So we can rewrite
$$f(z)=a_1 K(z,A_1)+\cdots+(a_{\kappa_1}+a_{\kappa_2})K(z,A_{\kappa_1})+\cdots+a_{N_1}K(z,A_{N_1}).$$
If $a_{\kappa_1}+a_{\kappa_2}=0,$ we will omit $K(z,A_{\kappa_1})$, so we may assume $a_{\kappa_1}+a_{\kappa_2}\neq0.$

Without loss of generality, assume  $1<\kappa_1,\kappa_2\leq N_1$  and $a_{\kappa_1}+a_{\kappa_2}\neq0.$
By hypothesis, we can obtain $D_2X_2=0$, where $$D_2=\left(
                                                                       \begin{array}{ccccc}
                                                                         \overline{A}_1 & \cdots &A_{\kappa_1} & \cdots & \overline{A}_{N_1} \\
                                                                          \overline{A}_1^2 & \cdots & A_{\kappa_1}^2 & \cdots & \overline{A}_{N_1}^2 \\
                                                                         \vdots & \cdots & \vdots & \cdots & \cdots \\
                                                                         \vdots & \cdots & \cdots & \cdots & \cdots \\
                                                                          \overline{A}_1^{N_1-1} & \cdots & \overline{A}_{\kappa_1}^{N_1-1} & \cdots &\overline{A}_{N_1}^{N_1-1} \\
                                                                       \end{array}
                                                                     \right)
                                                               $$ and $$ X_2=(a_1,a_2\cdots,(a_{\kappa_1}+a_{\kappa_2}),\cdots,a_{N_1})^T.$$
Note that if $N_1=3,$ then $$\left\{
                             \begin{array}{ll}
                               a_1 \overline{A}_1+(a_{\kappa_1}+a_{\kappa_2})\overline{A}_{\kappa_1}=0  \\
                               a_1 \overline{A}_1^2+(a_{\kappa_1}+a_{\kappa_2})\overline{A}_{\kappa_1}^2=0,
                             \end{array}
                           \right.$$
and hence we have $a_1=-(a_{\kappa_1}+a_{\kappa_2})$ and $A_1=A_{\kappa_1}=A_{\kappa_2}.$ Thus $f=0.$ We have used the fact $\{a_{\kappa_1},a_{\kappa_2}\}=\{a_2,a_3\}.$

Suppose $N_1 \geq 4.$ For the same reason, there  are  $\kappa_3$ and $\kappa_4$ such that $A_{\kappa_3}=A_{\kappa_4},$  where $\kappa_3\neq\kappa_4$ and $1\leq\kappa_3,\kappa_4\leq N_1.$ Finally, we conclude that
\begin{align}\label{ex4}
A_1=A_2=\cdots=A_{N_1}
\end{align}
if  the sum of any $k(2\leq k\leq N_1-1)$ elements in $\{a_i:1\leq i\leq N_1\}$ is not equal to 0. By assumption, $\sum_{i=1}^{N_1}a_i=0$ for $N_1\geq2,$
this together with (\ref{ex4}) show that $f=0,$ since $f(z)=\sum_{i=1}^{N_1}a_i K(z,A_i).$

On the other hand, suppose that $T_\varphi T_{\overline{\psi}}= T_{\varphi\overline{\psi}}$, then $$T_{\varphi+c_1} T_{\overline{\psi}+c_2}= T_{(\varphi+c_1)(\overline{\psi}+c_2)},$$
where $c_1$ and $c_2$ are constants.
This together with preceding proof show that $f$ or $g$ is a constant function if $T_fT_{\overline{g}} = T_{f\overline{g}}$. The proof is complete.
\end{proof}
\begin{remark}\label{RE1}
 The proof of Theorem \ref{zerop} shows that the hypothesis $ m\neq0$ is necessary. In other words, the (\ref{ex1}) doesn't hold if $m=0.$
Then the iteration method doesn't work, since we have lost the initial value.
\end{remark}
Now, we now proceed to answer the second question.
\begin{theorem}
Let $m$ be a positive integer. Suppose $a,b\in \mathbb{C},$ then $T_{e^{a z}} T_{\overline{e^{b z}}}=T_{e^{a z} \overline{e^{b z}}}$ on $F^{2,m}(\mathbb{C})$ if and only if $ab=0.$
\end{theorem}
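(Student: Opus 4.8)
\emph{Plan.} The forward (``if'') direction is immediate: if $ab=0$ then one of $a,b$ vanishes, so one of the symbols $e^{az}$, $\overline{e^{bz}}$ equals the constant $1$, and $T_1=I$ makes the identity $T_{e^{az}}T_{\overline{e^{bz}}}=T_{e^{az}\overline{e^{bz}}}$ trivial. Thus the content is the converse. Since $e^{az}$ and $e^{bz}$ are not finite linear combinations of kernel functions, Theorem \ref{zerop} does not apply directly, but the same monomial-testing strategy works. I would first note that $e^{az}$ (and $e^{az}\overline{e^{bz}}$) satisfies (\ref{welldown}), because the integrand is dominated by $e^{(|a|+|b|)|w|+|z||w|-|w|^2}|w|^{2m}$, so $T_{e^{az}}$ and $T_{e^{az}\overline{e^{bz}}}$ are densely defined and it suffices to test the operator identity against the monomials $z^l$, which span a dense subspace.

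Next I would expand $\overline{e^{bz}}=\sum_{k\ge0}\frac{\overline b^{\,k}}{k!}\overline z^{\,k}$ and $e^{az}=\sum_{n\ge0}\frac{a^n}{n!}z^n$, and evaluate both sides on $z^l$ via Lemma \ref{Hz}. Since $e^{az}$ is entire and $e^{az}z^{l-k}\in F^{2,m}(\mathbb{C})$, the operator $T_{e^{az}}$ acts as plain multiplication on analytic inputs; combining this with the first and third formulas of Lemma \ref{Hz} yields
\begin{align*}
T_{e^{az}}T_{\overline{e^{bz}}}z^l&=\sum_{k=0}^{l}\sum_{n\ge0}\frac{\overline b^{\,k}a^{n}}{k!\,n!}\frac{(l+m)!}{(l-k+m)!}\,z^{\,l-k+n},\\
T_{e^{az}\overline{e^{bz}}}z^l&=\sum_{n\ge0}\sum_{k=0}^{l+n}\frac{a^{n}\overline b^{\,k}}{n!\,k!}\frac{(l+n+m)!}{(l+n-k+m)!}\,z^{\,l+n-k}.
\end{align*}

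Rather than match all coefficients, I would extract only the coefficient of $z^0$ in the difference, for every $l\ge0$. In the first sum the condition $l-k+n=0$ forces $k=l$, $n=0$; in the second $l+n-k=0$ forces $k=l+n$; the two $n=0$ contributions cancel, so the vanishing of the semi-commutator implies
\begin{equation*}
\frac{\overline b^{\,l}}{m!}\sum_{n\ge1}\frac{(a\overline b)^{n}}{n!}\prod_{i=1}^{m}(l+n+i)=0\qquad(l\ge0).
\end{equation*}
Assuming $ab\ne0$ for contradiction gives $\overline b^{\,l}\ne0$, so with $t:=a\overline b\ne0$ the polynomial $P(l)=\sum_{n\ge1}\frac{t^{n}}{n!}\prod_{i=1}^{m}(l+n+i)$ in $l$ must vanish identically. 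Its leading ($l^{m}$) coefficient is $\sum_{n\ge1}\frac{t^{n}}{n!}=e^{t}-1$, forcing $e^{t}=1$; its $l^{m-1}$ coefficient is $m\sum_{n\ge1}\frac{t^{n}}{n!}n+\frac{m(m+1)}{2}\sum_{n\ge1}\frac{t^{n}}{n!}=mte^{t}+\frac{m(m+1)}{2}(e^{t}-1)$, which under $e^{t}=1$ collapses to $mt$. Since $m\ge1$ this forces $t=0$, contradicting $t\ne0$; hence $ab=0$.

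The main obstacle is organizing the coefficient bookkeeping in the two double sums so that the $z^0$-slice telescopes cleanly to a single polynomial identity in $l$; after that the argument is the short two-coefficient computation. The indispensable role of $m\ne0$ is visible precisely in the $l^{m-1}$ coefficient $mt$: when $m=0$ the product $\prod_{i=1}^{m}(l+n+i)$ is empty and $P(l)\equiv e^{t}-1$, so only $e^{t}=1$ (i.e. $a\overline b\in2\pi\mathrm{i}\,\Z$) survives, recovering the Fock-space phenomenon of Theorem \ref{A} and echoing Remark \ref{RE1}.
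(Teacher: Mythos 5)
Your proof is correct, and its first half coincides with the paper's: both arguments test the identity $T_{e^{az}}T_{\overline{e^{bz}}}=T_{e^{az}\overline{e^{bz}}}$ against monomials via Lemma \ref{Hz} and reduce it to the vanishing of $\rho(l)=\sum_{n\ge1}\frac{t^{n}}{n!}\prod_{i=1}^{m}(l+n+i)$ for all integers $l\ge0$, where $t=a\overline{b}$ (the paper reaches this as the case $0=l\le j$ of (\ref{QQQQ}); your coefficient-of-$z^{0}$ slice of $T$ applied to $z^l$ is the transposed but equivalent extraction). The endgames genuinely differ. The paper runs an iterated finite-difference cascade (\ref{EQ1})--(\ref{EQ6}) on $\rho$ to produce the two series identities (\ref{EQ3}), converts them into the transcendental system (\ref{EQ4}), and solves it to force $t=-(m+1)$ together with $e^{t}=-m$, which is impossible. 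You instead observe that $\rho(l)$ is a polynomial in $l$ of degree $m$ vanishing at infinitely many points, hence identically, and read off only the top two coefficients: the $l^{m}$ coefficient gives $e^{t}=1$, after which the $l^{m-1}$ coefficient collapses to $mt=0$, contradicting $t\ne0$. Since finite differencing is just another way of isolating leading coefficients, the two routes use the same underlying information, but yours is shorter, avoids solving a nonlinear system, and makes the role of $m>0$ transparent: for $m=0$ only $e^{t}=1$ survives, i.e.\ $a\overline{b}\in2\pi\mathrm{i}\,\mathbb{Z}$, recovering the Fock-space phenomenon of Theorem \ref{A}, exactly as you note. The only points worth writing out in full are the absolute convergence that justifies regarding $\rho(l)$ as a genuine polynomial in $l$ (interchanging the sum over $n$ with the expansion in powers of $l$), and the observation that $T_{e^{az}}$ acts as multiplication on polynomials because $e^{az}p(z)\in F^{2,m}(\mathbb{C})$; both are routine.
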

\begin{proof}
If $ab=0,$ it is clear that $T_{e^{a z}} T_{\overline{e^{b z}}}=T_{e^{a z} \overline{e^{b z}}}$. Now assume $ab\neq0.$ then, by (\ref{QQQQ}),
\begin{align*}
&  \sum_{k=\max \{0, j-l\}} \frac{(k+l+m) !}{k !(k+l-j) !} a^{k} \overline{b}^{k+l-j} \\
=&  \sum_{k=\max \{0, j-l\}}^{j}  \frac{(j+m) !}{k !(k+l-j) !} \frac{(l+m) !}{(j-k_i+m) !} a^{k} \overline{b}^{k+l-j}
\end{align*}
Let $0=l\leq j,$ we obtain \begin{align*}
0 =\sum_{k=1}^{\infty} \frac{a^{k} \bar{b}^{k}}{k !} \frac{(k+j+m) !}{(k+j) !}.
\end{align*}
Using the equation, we now define $$\rho(j)=\sum_{k=1}^{\infty} \frac{a^{k} \bar{b}^{k}}{k !}\prod_{i=1}^m(k+j+i)=0.$$
A simple calculation gives
\begin{equation}\label{EQ1}\left\{
                          \begin{array}{ll}
\rho(j)-\rho(j-1)=\sum_{k=1}^{\infty} \frac{ma^{k} \bar{b}^{k}}{k !}\prod_{i=2}^{m}(k+j-1+i)=0; \\
\rho(j+1)-\rho(j)=\sum_{k=1}^{\infty} \frac{ma^{k} \bar{b}^{k}}{k !}\prod_{i=2}^{m}(k+j+i)=0;\\
\rho(j+2)-\rho(j+1)=\sum_{k=1}^{\infty} \frac{ma^{k} \bar{b}^{k}}{k !}\prod_{i=2}^{m}(k+j+1+i)=0;\\
\rho(j+3)-\rho(j+2)=\sum_{k=1}^{\infty} \frac{ma^{k} \bar{b}^{k}}{k !}\prod_{i=2}^{m}(k+j+2+i)=0;\\
\vdots
                          \end{array}
                        \right.\end{equation}
By (\ref{EQ1}),
\begin{equation}\label{EQ5}\left\{
                          \begin{array}{ll}
\frac{\rho(j+1)-2\rho(j)+\rho(j-1)}{m(m-1)}=\sum_{k=1}^{\infty} \frac{a^{k} \bar{b}^{k}}{k !}\prod_{i=3}^{m}(k+j-1+i)=0;\quad\natural\\
      \frac{\rho(j+2)-2\rho(j+1)+\rho(j)}{m(m-1)}=\sum_{k=1}^{\infty} \frac{a^{k} \bar{b}^{k}}{k !}\prod_{i=3}^{m}(k+j+i)=0; \quad \sharp\\
       \frac{\rho(j+3)-2\rho(j+2)+\rho(j+1)}{m(m-1)}=\sum_{k=1}^{\infty} \frac{a^{k} \bar{b}^{k}}{k !}\prod_{i=3}^{m}(k+j+1+i)=0; \quad \flat\\
\vdots

         \end{array}
                        \right.\end{equation}
It follows from   (\ref{EQ5}) that
\begin{equation}\label{EQ6}
\left\{
                          \begin{array}{ll}
\frac{\sharp-  \natural}{m-2} =\sum_{k=1}^{\infty} \frac{a^{k} \bar{b}^{k}}{k !}\prod_{i=4}^{m}(k+j-1+i)=0;\\
\frac{\flat-\sharp}{m-2}=\sum_{k=1}^{\infty} \frac{a^{k} \bar{b}^{k}}{k !}\prod_{i=4}^{m}(k+j+i)=0;\\
\vdots
\end{array}
                        \right.
\end{equation}
     Using (\ref{EQ1}), (\ref{EQ5}) and (\ref{EQ6}), we can obtain that
\begin{equation}\label{EQ3}\left\{
                          \begin{array}{ll}
                            \sum_{k=1}^{\infty} \frac{a^{k} \bar{b}^{k}}{k !}(k+m)=0, \\
                            \sum_{k=1}^{\infty} \frac{a^{k} \bar{b}^{k}}{k !}\prod_{i=m-1}^{m}(k+i)=0.
                          \end{array}
                        \right.\end{equation}
It follows (\ref{EQ3}) that \begin{equation}\label{EQ4}\left\{
                          \begin{array}{ll}
                            a\overline{b}e^{a\overline{b}}+m(e^{a\overline{b}}-1) =0, \\
                            (a\overline{b})^2e^{a\overline{b}}+2m a\overline{b}e^{a\overline{b}}+m(m-1)(e^{a\overline{b}}-1)=0.
                          \end{array}
                        \right.\end{equation}
                        Using (\ref{EQ4}),  we can see that 
                        \begin{equation}\label{EQ7}\left\{
                          \begin{array}{ll}
                            a\overline{b}e^{a\overline{b}}+m(e^{a\overline{b}}-1) =0, \\
                            (a\overline{b})^2e^{a\overline{b}}+2m a\overline{b}e^{a\overline{b}}-(m-1)a\overline{b}e^{a\overline{b}}=0.
                          \end{array}
                        \right.\end{equation}
By (\ref{EQ7}),
\begin{equation*}\left\{
                          \begin{array}{ll}
                            a\overline{b}=-(m+1), \\
                     e^{a\overline{b}}=-m.                          \end{array}
                        \right.\end{equation*}
                        Thus, \begin{equation*}\left\{
                          \begin{array}{ll}
                            a\overline{b}=-(m+1), \\
                            e^{a\overline{b}}-a\overline{b}-1=0.                          \end{array}
                        \right.\end{equation*}
This contradiction implies that $ab=0$.
 This proof is complete.
 \end{proof}
 \section{Concluding Remarks}
 In this final section, we consider the possibility of extending the result to other symbol spaces.

\begin{theorem}\label{T1}
 Suppose $f,g\in \mathcal{P}.$ Then $H^*_{\overline{f}}H_{\overline{g}}=0$ on $F^{2,m}(\mathbb{C})$ if and only if at least one of  $f$ and $g$ is a constant function.
  \end{theorem}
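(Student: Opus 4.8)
The plan is to reduce the statement to a single leading-coefficient computation, using the explicit action of Toeplitz operators on monomials recorded in Lemma~\ref{Hz}. The easy direction is immediate: if $g$ is constant, say $g=c$, then for any analytic $h\in F^{2,m}(\mathbb{C})$ we have $\overline{g}h=\overline{c}\,h$ and $P(\overline{c}\,h)=\overline{c}\,h$, so $H_{\overline{g}}=(I-P)(\overline{g}\,\cdot)=0$ and hence $H^*_{\overline{f}}H_{\overline{g}}=0$; the same reasoning applied to $H_{\overline{f}}$ settles the case where $f$ is constant. Thus everything reduces to the contrapositive: if both $f$ and $g$ are non-constant, then $H^*_{\overline{f}}H_{\overline{g}}=(T_f,T_{\overline{g}}]\neq 0$.

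For this I would write $f(z)=\sum_{p=0}^{P}\alpha_p z^p$ and $g(z)=\sum_{q=0}^{Q}\beta_q z^q$ with $\alpha_P\neq0$, $\beta_Q\neq0$ and $P,Q\geq1$, and compute the semi-commutator on monomials. Applying the second and third formulas of Lemma~\ref{Hz} term by term gives, for every $j\geq Q$,
\[
(T_{f\overline{g}}-T_fT_{\overline{g}})z^j=\sum_{p=0}^{P}\sum_{q=0}^{Q}\alpha_p\overline{\beta_q}\,D_{p,q}(j)\,z^{\,j+p-q},\qquad
D_{p,q}(j)=\frac{(j+p+m)!}{(j+p-q+m)!}-\frac{(j+m)!}{(j-q+m)!}.
\]
The elementary lemma I would isolate next is that, for $q\geq1$, $D_{p,q}$ is a polynomial in $j$ of degree exactly $q-1$ whose leading coefficient equals $pq$, while $D_{p,q}\equiv0$ whenever $p=0$ or $q=0$. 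This follows by writing each factorial quotient in the variable $u=j+m$ as a product of $q$ consecutive integers and comparing their first two elementary symmetric functions: the $u^q$ terms cancel, and the difference of the $u^{q-1}$ coefficients is precisely $pq$.

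The final step separates the output by powers of $z$. Two index pairs feed the same monomial $z^{\,j+d}$ exactly when $p-q=d$, so the hypothesis $(T_f,T_{\overline{g}}]z^j=0$ forces $\sum_{p-q=d}\alpha_p\overline{\beta_q}\,D_{p,q}(j)=0$ for all large $j$, hence identically in $j$. Taking $d=P-Q$, the admissible pairs are $(p,q)=(q+P-Q,\,q)$ with $\max(0,Q-P)\leq q\leq Q$; since $\deg_j D_{p,q}=q-1$ is strictly increasing in $q$, the unique top-degree ($j^{\,Q-1}$) contribution comes from $q=Q$, i.e.\ from the pair $(P,Q)$. Reading off that coefficient yields $\alpha_P\overline{\beta_Q}\,PQ=0$, which is impossible because $\alpha_P,\beta_Q\neq0$ and $P,Q\geq1$. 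This contradiction completes the proof.

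I expect the main obstacle to be the clean verification of the degree and leading coefficient of $D_{p,q}$, since the whole argument rests on the cancellation of the top-order terms of the two factorial quotients leaving the nonzero residue $pq$; once this is established, the monotonicity of $\deg_j D_{p,q}$ in $q$ lets me extract a single coefficient without any Vandermonde machinery and the contradiction is automatic. A minor point to handle carefully is the range of $j$ (restricting to $j\geq Q$ so that no factorial expression degenerates into the vanishing branch of Lemma~\ref{Hz}), together with the remark that an identity holding for all sufficiently large integers $j$ is a genuine polynomial identity, which is what justifies passing to leading coefficients.
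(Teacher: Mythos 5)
Your proof is correct, and it is worth noting that the paper does not actually prove Theorem \ref{T1} at all: it is stated with the remark that the proof is omitted as ``a slight improvement of a result contained in'' the Yan--Zheng paper. So your argument supplies something the paper deliberately leaves out. It is also methodologically different from the strategy the paper uses for its main result (Theorem \ref{zerop}), where the symbols are combinations of reproducing kernels: there the author must control infinite series in the kernel expansion, and the proof runs through the identity $E=1$, the auxiliary coefficients $C_i$ of Lemma \ref{ex}, the nonvanishing of $\Theta$ in Lemma \ref{re1}, Vandermonde determinants, and an iteration on $j$. Your polynomial setting lets you avoid all of that: the quantity $D_{p,q}(j)=\frac{(j+p+m)!}{(j+p-q+m)!}-\frac{(j+m)!}{(j-q+m)!}$ is a genuine polynomial in $j$, the cancellation of the top symmetric function leaves the leading coefficient $pq$ in degree $q-1$, and grading the output by the shift $d=p-q$ isolates the single pair $(P,Q)$ at the top degree, giving $\alpha_P\overline{\beta_Q}PQ=0$ at once. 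The two restrictions you flag (working with $j\geq Q$ to stay off the vanishing branch of Lemma \ref{Hz}, and upgrading vanishing for large $j$ to a polynomial identity) are exactly the points that need care, and you handle both. One cosmetic remark: your argument never uses $D_{p,q}\equiv 0$ for $p=0$ or $q=0$ in the final step, since the stratum $d=P-Q$ already has its unique top-degree term at $(P,Q)$; also note that your computation makes no use of $m\neq 0$, consistent with the fact that for polynomial symbols the conclusion holds on the Fock space as well.
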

We omit the proof of Theorem \ref{T1} here,  since
it is a slight improvement of result contained in  \cite{Yan}. Define $$\mathcal{A}=\left\{\sum_{i=1}^N p_i(z) K_m(z,a_i): p_i\in \mathcal{P} ~\text{and}~ a_j\in \mathbb{C}~ \text{for}~ i=1,\cdots, N\right\}.$$  It is easy to see that
$$e^{\overline{a}z}=(\overline{a}z)^mK_{m}(z,a)+q_m(\overline{a}z).$$
This implies that $\mathcal{A}_1\subseteq \mathcal{A}.$
 Note that $\mathcal{P}$ and $D$ are dense in $F^{2,m}(\mathbb{C})$. It would be an interesting
problem to consider the semi-commutativity of the
Toeplitz operators on $F^{2,m}(\mathbb{C})$.  By what was proved in the previous paragraph, we give a conjecture:
\begin{conjecture}
Suppose $f$ and $g$ be functions in $\mathcal{A},$ then $H^*_{\overline{f}}H_{\overline{g}}=0$ on $F^{2,m}(\mathbb{C})$ if and only if at least one of $f$ and $g$ is a constant function.
\end{conjecture}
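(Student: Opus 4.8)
The plan is to convert the statement about Hankel operators into one about the semi-commutator via (\ref{l111}), which gives $H^*_{\overline f}H_{\overline g}=0\iff T_fT_{\overline g}=T_{f\overline g}$, and then to test this operator identity on the monomials $z^j$, exactly as in the proofs of Theorem \ref{zerop} and Theorem \ref{T1}. The converse implication is immediate: if $f$ (resp. $g$) is constant then $H_{\overline f}=0$ (resp. $H_{\overline g}=0$), so the product vanishes. For the forward direction write $f=\sum_{i=1}^{N_1}p_i(z)K_m(z,a_i)$ and $g=\sum_{\lambda=1}^{N_2}q_\lambda(z)K_m(z,b_\lambda)$ and expand both as power series $f(z)=\sum_{k\ge0}f_kz^k$, $g(z)=\sum_{q\ge0}g_qz^q$. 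Using $K_m(z,a)=\sum_\ell\frac{m!}{(\ell+m)!}\overline a^{\,\ell}z^\ell$ one checks that the Taylor coefficients of an $\mathcal{A}$-function are \emph{exponential-polynomial} sequences, $f_k=\frac{m!}{(k+m)!}\sum_iP_i(k)\,\overline{a_i}^{\,k}$ with $\deg P_i=\deg p_i$ (and the nodes $a_i=0$ contributing pure polynomials), and similarly for $g_q$. This is the structural feature that lets $\mathcal{A}$ interpolate between the two settings already settled: $p_i\equiv\mathrm{const}$ recovers $D$ (Theorem \ref{zerop}) and $a_i=0$ recovers $\mathcal{P}$ (Theorem \ref{T1}).

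First I would record the monomial formulas. By Lemma \ref{Hz} and linearity,
\begin{align*}
T_fT_{\overline g}z^j&=\sum_{k\ge0}\sum_{0\le q\le j}f_k\overline{g_q}\,\frac{(j+m)!}{(j-q+m)!}\,z^{j+k-q},\\
T_{f\overline g}z^j&=\sum_{k\ge0}\sum_{0\le q\le j+k}f_k\overline{g_q}\,\frac{(j+k+m)!}{(j+k-q+m)!}\,z^{j+k-q},
\end{align*}
which generalize the two displays in the proof of Theorem \ref{zerop}. Equating coefficients of each power of $z$ turns $T_fT_{\overline g}=T_{f\overline g}$ into a doubly-indexed family of scalar identities in $(j,s)$ with $s=j+k-q$. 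Substituting the exponential-polynomial form of $f_k,g_q$, the inner sums over $k$ collapse into values and parameter-derivatives of the function $\Phi(w)=m!\frac{e^{w}-q_m(w)}{w^{m}}$ evaluated at $w=\overline{a_i}b_\lambda$, the $P_i,Q_\lambda$ factors producing the derivatives (exactly the $(\partial_z+\overline z+\cdots)$ phenomenon recorded in the introduction for $\mathcal{A}_1\subset\mathcal{A}$).

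The substantive part then mimics the two known cases. The boundary equations $j=0$ and $l=0$ (cf.\ Step 1 of Theorem \ref{zerop}), after separating the distinct nodes by a Vandermonde determinant (Lemma \ref{lemma1}), should force the transcendental constraints $\Phi(\overline{a_i}b_\lambda)=1$ together with a finite hierarchy of parameter-derivative constraints reflecting $\deg p_i$ and $\deg q_\lambda$. One then iterates in the power $j$ of the test monomial, extracting at each stage the leading parameter-monomial $\overline{a_i}^{\,\mu}b_\lambda^{\,\nu}$, exactly as in Steps 2--4 of Theorem \ref{zerop}; for the nodes $a_i=b_\lambda=0$ this degenerates to the output-offset induction behind Theorem \ref{T1}, whose decisive factor $\prod_{t=0}^{r-1}(j+N+m-t)-\prod_{t=0}^{r-1}(j+m-t)$ has leading term $rN\neq0$, so the identity (valid for all large $j$) forces the next coefficient to vanish. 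Iterating should collapse one of the two factors to a constant, which is the desired conclusion.

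The hard part will be the third step in the \emph{mixed} regime, where $p_i$ and $a_i$ are both nontrivial on the same side. There the Vandermonde matrices of Theorem \ref{zerop} become confluent (Hermite-type) Vandermonde matrices, since differentiation in the node parameters creates repeated nodes, and the clean ``only one surviving pair $(k,q)$'' collapse of Theorem \ref{T1} fails because the polynomial factors couple several powers of $z$ simultaneously. Establishing the nonsingularity of this coupled confluent system---equivalently, that the simultaneous equations $\Phi(\overline{a_i}b_\lambda)=1$ together with their prescribed derivatives admit only the degenerate solution in which one factor is constant---is precisely the obstruction that keeps the statement conjectural. A sensible intermediate target, provable by the method above, is the case $f\in D$ and $g\in\mathcal{P}$ (or vice versa): it already forces one to merge the Vandermonde and offset-induction techniques, yet avoids simultaneous confluence on a single side.
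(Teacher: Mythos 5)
The statement you are addressing is not proved in the paper at all: it is stated explicitly as a \emph{conjecture}, and the author offers no argument for it beyond noting that the two extreme cases ($f,g\in D$, handled by Theorem \ref{zerop}, and $f,g\in\mathcal{P}$, attributed to a modification of \cite{Yan}) are known. Your proposal is therefore not being measured against an existing proof, and, as you yourself concede in your final paragraph, it does not close the problem either. The outline up to the point of failure is the natural one and matches the paper's method for $D$: reduce to $T_fT_{\overline g}=T_{f\overline g}$ via (\ref{l111}), test on monomials using Lemma \ref{Hz}, observe that Taylor coefficients of $\mathcal{A}$-functions are exponential-polynomial sequences, and separate nodes by Vandermonde determinants. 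But the decisive steps are missing. First, even the boundary case $j=0$ requires a linear-independence statement for the family $\{z^rK_m(\cdot,w_i)\}$ with repeated nodes and varying $r$ (a confluent analogue of Lemma \ref{lemma1}), which you assert but do not prove. Second, the engine of Steps 2--4 of Theorem \ref{zerop} is the explicit nonvanishing $\Theta=\frac{m(j-1)(l-1)}{m+1}\neq0$ from Lemma \ref{re1}; in the polynomial-weighted setting the corresponding coefficients acquire parameter-derivative corrections, and you give no computation showing the analogous quantities are nonzero, which is exactly what the induction needs to advance. Third, your claim that the intermediate case $f\in D$, $g\in\mathcal{P}$ is ``provable by the method above'' is itself unsubstantiated and would already require merging the two inductions.

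In short: what you have written is a credible research plan that correctly identifies where the difficulty lies (the mixed regime with simultaneous confluence, i.e.\ nontrivial $p_i$ and nonzero $a_i$ on the same side), but it contains no new lemma or computation that the paper does not already have, and the statement remains a conjecture after your argument just as before it. If you want to make progress, the concrete first targets are (i) a confluent version of Lemma \ref{lemma1} for $\{z^rK_m(\cdot,w)\}$, and (ii) the analogue of Lemma \ref{re1} when one symbol is a polynomial multiple of a single kernel $p(z)K_m(z,a)$ with $\deg p=1$ and $a\neq0$; either would be a genuine, checkable step beyond what the paper establishes.
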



\end{document}